\newcommand{\R}{\mathbb R}
\newcommand{\N}{\mathbb N}
\newcommand{\E}{\mathbb E}
\newcommand{\Pro}{\mathbb P}
\newtheorem{thm}{Theorem}[section]
\newtheorem{cor}[thm]{Corollary}
\newtheorem{lemma}[thm]{Lemma}
\newtheorem{df}[thm]{Definition}
\newtheorem{proposition}[thm]{Proposition}
\newtheorem{conjecture}[thm]{Conjecture}
\newtheorem*{rmk}{Remark}
\DeclareMathOperator{\signum}{sgn}
\newcommand{\Vol}{\textup{Vol}}
\begin{document}


\title{The variance conjecture on some polytopes}

\author[D.\,Alonso]{David Alonso-Guti\'errez}
\address{Universidad de Murcia}
\email{davidalonso@um.es}
\thanks{ The first named author is partially supported by
 Spanish grants MTM2010-16679 and
 \lq\lq Programa de Ayudas a Grupos de Excelencia de
 la Regi\'on de Murcia\rq\rq, Fundaci\'on S\'eneca, 04540/GERM/06.}

\author[J.\,Bastero]{Jes\'us Bastero}
\address{Universidad de Zaragoza}\email[(Jes\'us
Bastero)]{bastero@unizar.es}
\thanks{The second author is partially supported by Spanish grant MTM2010-16679}

\begin{abstract}
We show that any random vector uniformly distributed on any hyperplane
projection of $B_1^n$ or $B_\infty^n$ verifies the variance conjecture
$$\text{Var }|X|^2\leq C\sup_{\xi\in S^{n-1}}\E\langle X,\xi\rangle^2\E|X|^2.$$
Furthermore, a random vector uniformly distributed on a hyperplane projection of
$B_\infty^n$ verifies a negative square correlation property and consequently
any of its linear images verifies the variance conjecture.
\end{abstract}

\date{\today}

\maketitle

\section{Introduction and notation}

Let $X$ be a random vector in $\R^n $ with a log-concave density, {\it i.e.} $X$
is
distributed on $\R^ n$ according
to a probability measure, $\mu_X$, whose density with respect to the Lebesgue
measure is  $\exp(-V)$
for some convex function $V:\R^n\to (-\infty,\infty]$. For instance, vectors
uniformly
distributed on
convex bodies and Gaussian random vectors are log-concave.

A random vector $X$ is said to be isotropic if:
\begin{enumerate}
\item[i)]{The barycenter is at the origin, {\it i.e.}, $\E X=0$, and}

\item[ii)]{The covariance matrix $M_X$ is the identity $I_n$, {\it i.e.} $\E
\langle X,e_i\rangle\langle X,e_j\rangle
=\delta_{i,j}$, $1\leq i,j\le n$,}
\end{enumerate}
 where $\{e_i\}_{i=1}^n$ denotes the canonical basis in $\R^n$ and
$\delta_{i,j}$ denotes the Kronecker delta. It is well known that every centered
random
vector with full dimensional support
has a unique, up to orthogonal
transformations, linear image  which is isotropic.

Given a log-concave random vector $X$, we will denote by $\lambda_X^2$ the highest
eigenvalue of the
covariance matrix $M_X$ and by $\sigma_X$ its \lq\lq thin shell width\rq\rq\. {\it i.e.}
\begin{eqnarray*} \lambda_X^2&=&\Vert M_X\Vert_{\ell
^n_2\to \ell^n_2}=\sup_{\xi\in S^{n-1}}\E\langle X,\xi\rangle^2,\cr
\sigma_X&=&\sqrt{\E\left\vert|X|-(\E|X|^2)^\frac{1}{2}\,\right\vert^2}.
\end{eqnarray*}
($S^{n-1}$ represents the Euclidean unit sphere in $\R^n$).

In  Asymptotic Geometric Analysis, the {\it variance }
conjecture states the following:
\begin{conjecture}\label{varianceiso}
 There exists an absolute constant $C$ such that for every isotropic log-concave
 vector $X$, if we denote by $|X|$ its Euclidean norm,
$$
  \text{Var }|X|^2\leq C\E|X|^2= Cn.
$$
\end{conjecture}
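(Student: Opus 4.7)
The plan is to prove Conjecture~\ref{varianceiso} through a spectral gap (Poincar\'e inequality) for the measure $\mu_X$. The first step is a direct reduction: if $\mu_X$ satisfies $\text{Var}_{\mu_X}(f)\le C_P\int|\nabla f|^2\,d\mu_X$ for all smooth $f$ with an absolute constant $C_P$, then applying this to $f(x)=|x|^2$ gives $|\nabla f|^2=4|x|^2$, and isotropy yields $\text{Var}|X|^2\le 4C_P\,\E|X|^2=4C_P n$. So it suffices to produce an absolute upper bound for the Poincar\'e constant $C_P(\mu_X)$ of every isotropic log-concave measure; the remainder of the argument is to construct such a bound.

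For the Poincar\'e estimate I would use the localization (needle decomposition) method. Fix a smooth $f$ with $\int f\,d\mu_X=0$ and iteratively bisect $\R^n$ by hyperplanes chosen so that both $\int f\,d\mu_X$ and $\int|\nabla f|^2\,d\mu_X$ are preserved inside each half-space; in the limit this exhibits $\mu_X$ as a mixture of one-dimensional log-concave conditional measures (\emph{needles}) $\nu$ supported on line segments, on each of which $f$ still averages to zero. The sharp one-dimensional Poincar\'e inequality for log-concave measures (Bobkov) gives $\text{Var}_\nu f\le C\,\text{Var}_\nu(y)\,\int|\partial_u f|^2\,d\nu$, where $u$ is the direction of the needle. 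Integrating against the decomposition returns a Poincar\'e inequality for $\mu_X$ with constant controlled by the typical squared thickness of a needle in the direction $u$.

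The main obstacle, and the step I expect to absorb essentially all of the work, is producing an absolute bound on those needle thicknesses from isotropy alone; a crude $L^2$ bound only yields a loss that grows with $n$. To push through I would abandon the deterministic bisection in favour of Eldan's stochastic localization: replace the splitting by the martingale $(\mu_t)_{t\ge 0}$ obtained by Gaussian tilting of $\mu_X$, and monitor the scalar $g(t)=\E\,\lambda_{\mu_t}^2$, the expected operator norm of the covariance along the flow. An It\^o computation rewrites the desired Poincar\'e bound as a differential inequality for $g$, and the argument closes once one shows $g$ stays comparable to $\lambda_X^2$ up to times of order $\lambda_X^{-2}$. Establishing this control with a dimension-free constant is the decisive step; once it is in hand, $C_P(\mu_X)=O(1)$ follows, and Conjecture~\ref{varianceiso} is proved with an absolute $C$.
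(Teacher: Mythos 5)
The statement you are asked about is Conjecture~\ref{varianceiso} itself: the paper does not prove it, and at the time of writing it is a well-known open problem (equivalent to the thin shell conjecture, and implied by the even stronger KLS Conjecture~\ref{KLS}). Your proposal does not prove it either. The first reduction is fine: a dimension-free Poincar\'e constant for every isotropic log-concave measure, applied to $f(x)=|x|^2$, would indeed give $\text{Var}|X|^2\leq 4C_P\,\E|X|^2$. But an absolute bound on $C_P(\mu_X)$ for \emph{all} isotropic log-concave measures is precisely the KLS conjecture, which the paper explicitly records as stronger than the statement you are trying to prove. So your plan reduces an open conjecture to a strictly harder open conjecture.

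The two concrete steps you sketch do not close this gap. The needle decomposition combined with the one-dimensional Bobkov inequality is the original Kannan--Lov\'asz--Simonovits argument, and, as you yourself note, controlling the needle thickness from isotropy alone only yields a constant growing with $n$ (this is how the $(\E|X|)^2\sim n$ factor, rather than $\lambda_X^2=1$, arises in \cite{KLS}). Switching to Eldan's stochastic localization does not rescue it: the ``decisive step'' you name --- showing $\E\,\lambda_{\mu_t}^2$ stays of order $\lambda_X^2$ up to times of order $\lambda_X^{-2}$ with a dimension-free constant --- is exactly the unresolved heart of the KLS problem. What Eldan's scheme actually delivers (as cited in the paper) is the implication thin-shell $\Rightarrow$ KLS up to a polylogarithmic factor, i.e.\ it is conditional on Conjecture~\ref{ThinShellWidthConjecture}, which is equivalent to the very statement you set out to prove; the best unconditional input (Gu\'edon--Milman) gives only $n^{1/3}$-type bounds, hence a dimension-dependent constant. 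In short, your argument assumes the unknown step rather than proving it, so there is a genuine and unfixable gap as written. A correct contribution in the spirit of this paper would instead verify the variance inequality for specific families of measures (as the paper does for hyperplane projections of $B_1^n$ and $B_\infty^n$ via Cauchy's formula and negative square correlation), not attempt the general conjecture.
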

This conjecture was considered by
Bobkov and Koldobsky in the context
of the Central Limit Problem for isotropic convex bodies  (see \cite{BK}).
It was conjectured before by Antilla, Ball and Perissinaki,
(see \cite{ABP}) that for an isotropic log-concave vector $X$,
 $|X|$ is highly concentrated in a \lq\lq thin shell\rq\rq\,
more than the trivial bound $\text{Var}|X|\leq \E|X|^2$ suggested. Actually, it
is known that the variance conjecture is equivalent to the {\it
thin shell width} conjecture:
\begin{conjecture}\label{ThinShellWidthConjecture}
There exists an absolute constant $C$ such that for every isotropic log-concave
vector $X$
$$
 \sigma_X=\sqrt{\E\left\vert|X|-\sqrt n\,\right\vert^2}\,\leq C.
$$
\end{conjecture}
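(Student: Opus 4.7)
The inequality to prove is the thin-shell width conjecture itself, which remains open in asymptotic convex geometry. My plan is therefore to outline the natural strategic approach via a Poincar\'e inequality, and then to indicate where the essential difficulty lies.

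First, I would reduce $\sigma_X$ to a variance of $|X|$. Using isotropicity ($\E|X|^2 = n$), one has the decomposition
$$\sigma_X^2 = \E\left\vert|X| - \sqrt n\,\right\vert^2 = \text{Var}\,|X| + \left(\E|X| - \sqrt n\right)^2.$$
The second term is controlled by the first: since $\E|X| \le \sqrt{\E|X|^2} = \sqrt n$ and $(\sqrt n - \E|X|)(\sqrt n + \E|X|) = n - (\E|X|)^2 = \text{Var}\,|X|$, we obtain $(\sqrt n - \E|X|)^2 \le (\text{Var}\,|X|)^2/n$, and hence $\sigma_X^2 \le \text{Var}\,|X| + (\text{Var}\,|X|)^2/n$. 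Thus it suffices to prove the dimension-free bound $\text{Var}\,|X| \le C$.

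Second, I would derive this bound from a Poincar\'e inequality on the underlying log-concave measure $\mu$. If $\text{Var}_\mu f \le C_P \int |\nabla f|^2\, d\mu$ holds with an absolute constant $C_P$ for every isotropic log-concave $\mu$, then applying it to $f(x) = |x|$ (for which $|\nabla f|\equiv 1$ a.e.) immediately yields $\text{Var}\,|X| \le C_P$. The task therefore reduces to establishing a universal Poincar\'e constant for isotropic log-concave measures --- this is essentially the Kannan-Lov\'asz-Simonovits conjecture. The standard route is via Cheeger's inequality $C_P \le 4/h(\mu)^2$, combined with the Lov\'asz-Simonovits localization lemma, which reduces the $n$-dimensional isoperimetric question to a one-dimensional needle estimate for log-concave probability measures on the line, where explicit computation is feasible.

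The main obstacle lies precisely in this last step: producing an absolute lower bound on $h(\mu)$ that is independent of $n$. The best available estimates (Gu\'edon-Milman) give only $h(\mu)\gtrsim n^{-1/3}$, which yields $\text{Var}\,|X| \le C n^{2/3}$ and hence $\sigma_X \le C n^{1/3}$, well short of the $O(1)$ target. Closing the remaining gap in full generality would require a genuinely new ingredient --- a sharper needle inequality beyond what localization currently produces, a semigroup or transport interpolation giving $\text{Var}\,|X|^2$ directly without passing through Cheeger, or a stochastic-analysis argument tracking the evolution of the covariance under a localizing process. Because a universal solution appears out of reach with current technology, the realistic scaled-down plan --- and the one this paper in fact carries out --- is to restrict to particular families, namely hyperplane projections of $B_1^n$ and $B_\infty^n$, where explicit symmetries and negative correlation properties permit a direct bound of $\text{Var}\,|X|^2$ by $C\lambda_X^2 n$.
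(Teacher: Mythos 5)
This statement is stated in the paper as an open conjecture (Conjecture \ref{ThinShellWidthConjecture}); the paper gives no proof of it and only proves the general variance conjecture for specific families, so there is no in-paper argument to compare against. Your treatment is the right one: you correctly recognize the statement as open, your reduction $\sigma_X^2=\text{Var}\,|X|+(\E|X|-\sqrt n)^2\leq \text{Var}\,|X|+(\text{Var}\,|X|)^2/n$ is sound, the Poincar\'e/Cheeger route via $f(x)=|x|$ is the standard (and genuinely blocked) strategy, and you correctly identify that the paper's actual contribution is the restricted result for hyperplane projections of $B_1^n$ and $B_\infty^n$. One small imprecision: Gu\'edon--Milman prove the thin-shell bound $\sigma_X\leq Cn^{1/3}$ directly, not a Cheeger-constant bound; the estimate $h(\mu)\gtrsim n^{-1/3}$ (up to logarithmic factors) only follows from their result combined with Eldan's reduction, as the paper itself notes when quoting the extra factor $n^{2/3}(\log n)^2$ for Conjecture \ref{KLS} versus $n^{2/3}$ for the variance conjecture.
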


It is also known (see \cite{BN},
\cite{EK}) that these two equivalent conjectures are stronger than the
hyperplane conjecture, which states that every convex body of volume 1 has a
hyperplane section of volume greater than some absolute constant.

The variance conjecture is a particular case of a stronger conjecture,  due to
Kannan, Lov\'asz and Simonovits (see \cite{KLS}) concerning the
spectral gap
of log-concave probabilities. This conjecture  can be stated in the following way
due to the work of Cheeger, Maz'ya and Ledoux:
\begin{conjecture}\label{KLS}
 There exists an absolute constant $C$ such that for any locally Lipschitz
function,
$g:\R^n\to \R$,
 and any centered log-concave random vector $X$ in $\R^n$
$$
 \text{Var }g(X)\leq C\lambda_X^2\E|\nabla g(X)|^2.
$$
\end{conjecture}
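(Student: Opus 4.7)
The natural plan is to reduce Conjecture \ref{KLS} to a one-dimensional statement via a localization argument. By the Cheeger--Maz'ya--Ledoux equivalence, the Poincar\'e inequality $\text{Var}\,g(X)\leq C\lambda_X^2\,\E|\nabla g(X)|^2$ is equivalent, up to absolute constants, to the Cheeger-type isoperimetric inequality
\begin{equation*}
\mu_X^+(\partial A)\;\geq\;\frac{c}{\lambda_X}\,\min\{\mu_X(A),\,1-\mu_X(A)\}
\end{equation*}
for every Borel set $A\subset\R^n$, where $\mu_X^+$ denotes the log-concave boundary measure. I would therefore aim at this isoperimetric inequality rather than at the functional one, since isoperimetry is better suited to geometric decomposition.

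To pass from dimension $n$ to dimension $1$, I would invoke the Kannan--Lov\'asz--Simonovits localization lemma: iterated bisection of $\R^n$ by hyperplanes reduces the problem to establishing the analogous inequality for each log-concave probability density $f$ supported on an interval, namely
\begin{equation*}
\int_{\partial A}f\;\geq\;\frac{c}{\sigma_f}\,\min\{\mu_f(A),\,1-\mu_f(A)\},
\end{equation*}
where $\sigma_f^2$ is the variance of $f$ on the line. This one-dimensional statement is available through Bobkov's isoperimetric estimate for log-concave measures on $\R$, which produces exactly the required $1/\sigma_f$ scaling. Combined with the localization step, this yields the full conjecture provided $\sigma_f\leq C\lambda_X$ for every needle $f$ arising in the decomposition.

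The main obstacle, and the very reason Conjecture \ref{KLS} remains open in full generality, is quantitative control of the variances of those needles. Bisection can in principle produce conditional one-dimensional marginals whose variances $\sigma_f^2$ are not bounded by $\lambda_X^2$, so one cannot replace $\sigma_f$ by $\lambda_X$ in the reduction without further work. A complete proof must establish, by an absolute constant, the bound $\sup_f \sigma_f/\lambda_X \leq C$ across all needles, equivalently, that conditioning on intersections of half-spaces does not inflate the variance of one-dimensional marginals. As a parallel route I would run Eldan's stochastic localization: one realizes $\mu_X$ as a martingale average of Gaussian tilts of itself, and the desired spectral gap follows from a uniform operator-norm bound on the covariance of the tilted measures along the process. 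In both formulations the crux is the same: a sharp, dimension-free estimate for how the covariance of a log-concave measure behaves under conditioning or Gaussian tilting, and that is the step I expect to be the dominant obstacle.
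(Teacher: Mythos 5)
There is nothing to compare here, because the statement you were asked about is not a theorem of the paper: it is the Kannan--Lov\'asz--Simonovits spectral gap conjecture, which the paper (like the literature it cites) states as an open conjecture and never proves; the paper only proves special cases of the weaker Conjecture \ref{variance} for particular polytopes. Your text is likewise not a proof but a survey of the standard attack routes, and you say so yourself: after reducing, via the Cheeger--Maz'ya--Ledoux equivalence and KLS localization, to a one-dimensional isoperimetric inequality for needles, you need the uniform bound $\sigma_f\leq C\lambda_X$ over all needles $f$, and you explicitly leave that step unproved. That step is precisely where the argument breaks down: bisection by half-spaces does not preserve control of the covariance in the direction of the needle, and the variance of a needle is in general only bounded by diameter-type quantities of the support, not by $\lambda_X$. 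This is exactly why Kannan, Lov\'asz and Simonovits themselves obtained the inequality only with the factor $(\E|X|)^2$ in place of $\lambda_X^2$, and why Bobkov's refinement reaches only $\sigma_X\,\E|X|$; similarly, Eldan's stochastic localization yields the spectral gap only up to the thin-shell constant and logarithmic factors, not the full conjecture. So the ``dominant obstacle'' you name is not a technical detail to be filled in later; it is the entire content of the open problem, and a proposal that defers it proves nothing beyond what is already in the cited works.
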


Note that Conjecture \ref{varianceiso} is the particular case of Conjecture
\ref{KLS}, when we consider only isotropic vectors and $g(X)=|X|^2$. Our purpose
in this paper is to study the particular case of Conjecture \ref{KLS} in which
$g(X)=|X|^2$ but $X$ is not necessarily isotropic. Thus, we will study the
following {\it general variance} conjecture:
\begin{conjecture}\label{variance}
 There exists an absolute constant $C$ such that for every centered log-concave
 vector $X$
$$
  \text{Var }|X|^2\leq C\lambda_X^2\E|X|^2.
$$
\end{conjecture}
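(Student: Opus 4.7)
The plan is first to observe that Conjecture~\ref{variance} follows from the KLS conjecture (Conjecture~\ref{KLS}) by a diagonalization, and then, since KLS is open in general, to prove it unconditionally for the specific polytopes from the abstract by exploiting their symmetries. Write the covariance matrix of $X$ as $M_X = UDU^T$ with $D = \text{diag}(\mu_1,\dots,\mu_n)$ and $\lambda_X^2 = \max_i \mu_i$, and set $Z = D^{-1/2}U^T X$, which is isotropic and log-concave. Then $|X|^2 = Z^T D Z = \sum_i \mu_i Z_i^2$, and applying Conjecture~\ref{KLS} to $g(z)=\sum_i \mu_i z_i^2$ (with $\lambda_Z=1$ since $Z$ is isotropic) gives
\[
\text{Var}\,|X|^2 \leq C\,\E|\nabla g(Z)|^2 = 4C\sum_i \mu_i^2 \leq 4C\,\lambda_X^2 \sum_i \mu_i = 4C\,\lambda_X^2\,\E|X|^2.
\]

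For the concrete polytopal cases announced in the abstract I would proceed without assuming KLS. Let $X$ be uniform on a hyperplane projection of $B_\infty^n$ or $B_1^n$, and in a convenient orthonormal basis $\{u_i\}$ of the hyperplane set $X_i=\langle X,u_i\rangle$. Decompose
\[
\text{Var}\,|X|^2 = \sum_i \text{Var}\,X_i^2 + \sum_{i\neq j} \text{Cov}(X_i^2,X_j^2).
\]
One-dimensional log-concave marginals satisfy $\E X_i^4\leq C(\E X_i^2)^2$ (Borell), and since $\E X_i^2\leq \lambda_X^2$ the diagonal sum is at most $C\lambda_X^2\sum_i \E X_i^2 = C\lambda_X^2\,\E|X|^2$. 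The off-diagonal sum is the heart of the matter, and I would attempt to kill it via a \emph{negative square correlation} property $\text{Cov}(\langle X,\xi\rangle^2,\langle X,\eta\rangle^2)\leq 0$ for every orthogonal pair $\xi\perp\eta$. If this holds in a direction-independent form, then writing $|TX|^2=X^T(T^TT)X$ and diagonalizing $T^TT$ reduces the variance of $|TX|^2$ to a sum of diagonal terms of exactly the type just bounded; a short computation identifies the leading factor with $\lambda_{TX}^2$, transferring the conclusion to every linear image of $X$ and matching the stronger statement in the abstract for $B_\infty^n$-projections.

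The main obstacle is establishing the negative square correlation on the projected body. For the full cube $B_\infty^n$ the coordinates are independent and the property is trivial, but projecting onto $\xi^\perp$ imposes the constraint $\sum_i \xi_i y_i=0$ and couples all coordinates. I expect the right approach is to write the density of $X$ on $\xi^\perp$ as a fiber integral (a Brunn-type slicing of the cube by affine lines), and then to exploit the reflection symmetries $y_k\mapsto -y_k$ of the cube to compare joint fourth moments slice by slice, reducing the inequality to a one-variable log-concavity or FKG-type statement on the constraint hyperplane. For $B_1^n$, whose coordinates fail to be independent even before projection, a clean correlation inequality is less likely to hold, which is consistent with the abstract asserting only the variance conjecture (and not negative square correlation) for $B_1^n$-projections; there I would instead bound the off-diagonal contribution directly, using the piecewise-linear structure of the density induced by the faces of the cross-polytope.
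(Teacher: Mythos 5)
The statement you were asked to prove is Conjecture \ref{variance} itself; it is an open conjecture, not a theorem of the paper. The paper never proves it in general: it only (i) notes it is the case $g(X)=|X|^2$ of Conjecture \ref{KLS}, (ii) shows it is equivalent to Conjecture \ref{ThinShellGeneral} and holds with an extra factor $n^{2/3}$ via Gu\'edon--Milman, and (iii) proves it unconditionally for special examples (hyperplane projections of $B_\infty^n$, all linear images thereof, and hyperplane projections of $B_1^n$). Your first step, the reduction to KLS, is correct but conditional on an open conjecture, and in fact needs no diagonalization at all: applying Conjecture \ref{KLS} directly to $X$ with $g(x)=|x|^2$ gives $\text{Var }|X|^2\leq C\lambda_X^2\E|\nabla g(X)|^2=4C\lambda_X^2\E|X|^2$. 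So this part is only the observation, already made in the paper's introduction, that the general variance conjecture is a special case of KLS; it cannot count as a proof of the statement.

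For the unconditional special cases your outline agrees with the paper's strategy in structure (square negative correlation plus Borell's lemma handles the diagonal terms and transfers to all linear images of the cube projections, as in Proposition \ref{squnegcorr}; a direct variance estimate is used for $B_1^n$ projections), but the decisive technical step is missing in both cases. The paper's engine is Cauchy's formula for polytope projections: expectations over $P_HK_0$ are weighted averages over the projected facets with weights $\Vol(P_H F_i)/\Vol(K)$. For the cube this reduces everything to facets, on which the coordinates are independent and uniform, and the negative square correlation for an arbitrary orthonormal pair $\eta_1\perp\eta_2$ in $H$ follows from an explicit fourth-moment computation; your proposed route via Brunn-type slicing and an FKG-type inequality on the constraint hyperplane is speculative, and note that the property is needed with respect to \emph{every} orthonormal basis of $H$, so coordinate reflection symmetries alone do not obviously suffice. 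For $B_1^n$ the paper writes $X=P_H(\varepsilon Y)$ with $Y$ uniform on the simplex and $\varepsilon$ a sign vector with law proportional to $|\langle\varepsilon,\theta\rangle|$, computes the simplex moments exactly and controls the sign contribution by Khintchine's inequality; it also needs Proposition \ref{sigma} (almost isotropicity of hyperplane projections of symmetric isotropic bodies, via Hensley and Rogers--Shephard) to identify $\lambda_X^2\sim n^{-2}$ and $\E|X|^2\sim n^{-1}$, a step absent from your sketch. As written, your proposal is a correct conditional reduction plus a plausible research plan, not a proof.
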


In the same way that Conjecture \ref{varianceiso} is equivalent to Conjecture
 \ref{ThinShellWidthConjecture}, Conjecture \ref{variance} can be shown
(see Section \ref{General}) to be equivalent to the following {\it general thin shell width} conjecture:
\begin{conjecture}\label{ThinShellGeneral}
There exists an absolute constant $C$ such that for every centered log-concave
 vector $X$
$$
\sigma_X\leq C\lambda_X
$$
\end{conjecture}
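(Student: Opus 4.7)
The plan is to deduce $\sigma_X\le C\lambda_X$ from Conjecture~\ref{KLS} applied with the specific test function $g(x)=|x|^2$, followed by one elementary pointwise inequality. For a centered log-concave vector $X$ in $\R^n$ one has $\nabla g(X)=2X$, hence $\E|\nabla g(X)|^2=4\E|X|^2$, and substituting into Conjecture~\ref{KLS} gives
\[
\text{Var}(|X|^2)\le C\lambda_X^2\,\E|\nabla g(X)|^2=4C\lambda_X^2\,\E|X|^2,
\]
which is precisely Conjecture~\ref{variance} up to the constant $4C$.

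To pass from this second-moment bound on $|X|^2$ to a thin-shell bound on $|X|$, I would use the factorization
\[
|X|-\sqrt{\E|X|^2}=\frac{|X|^2-\E|X|^2}{|X|+\sqrt{\E|X|^2}}
\]
together with the pointwise inequality $|X|+\sqrt{\E|X|^2}\ge\sqrt{\E|X|^2}$. Squaring and taking expectations yields $\sigma_X^2\le \text{Var}(|X|^2)/\E|X|^2$, and combining with the previous display gives $\sigma_X^2\le 4C\lambda_X^2$, i.e.\ $\sigma_X\le 2\sqrt{C}\,\lambda_X$, as required.

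The main obstacle is that this short reduction uses the full strength of Conjecture~\ref{KLS}, which is itself open. Unconditionally one can only substitute the best available upper bound on the Poincar\'e constant of isotropic log-concave measures---reducing $X$ to an isotropic vector $Y=M_X^{-1/2}X$ and checking that Conjecture~\ref{KLS} for $X$ follows from Poincar\'e for $Y$ with the same constant, since for $h(y)=g(M_X^{1/2}y)$ one has $|\nabla h(y)|\le\lambda_X|\nabla g(M_X^{1/2}y)|$---and the current estimates (Eldan's stochastic localization, Lee--Vempala, Klartag--Lehec) grow slowly with the dimension. Making the constant truly absolute is equivalent to settling KLS in full generality; consistent with this, the paper's strategy, as announced in the abstract, is to obtain Conjecture~\ref{ThinShellGeneral} unconditionally only for the structured class of hyperplane projections of $B_1^n$ and $B_\infty^n$, where direct geometric arguments replace the missing dimension-free Poincar\'e estimate.
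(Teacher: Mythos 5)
The statement you were asked about is a \emph{conjecture}, not a theorem: the paper never proves Conjecture~\ref{ThinShellGeneral} in general, and neither do you. Your argument is a conditional implication: KLS (Conjecture~\ref{KLS}) applied to $g(x)=|x|^2$ gives Conjecture~\ref{variance}, and then the pointwise identity $|X|-\sqrt{\E|X|^2}=(|X|^2-\E|X|^2)/(|X|+\sqrt{\E|X|^2})$ gives $\sigma_X^2\le \text{Var}(|X|^2)/\E|X|^2$, hence the thin-shell bound. Both steps are correct as written (the second is literally the first inequality in the paper's proposition relating $\sigma_{TX}^2$ and $\text{Var}|TX|^2/\E|TX|^2$, and the paper itself observes that Conjecture~\ref{variance} is the special case $g(X)=|X|^2$ of Conjecture~\ref{KLS}). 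But since Conjecture~\ref{KLS} is strictly stronger and itself open, you have reduced an open conjecture to a harder open conjecture; this does not constitute a proof of the statement, as you yourself concede in your closing paragraph. That reliance on KLS is the genuine gap.

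For comparison with what the paper actually does around this statement: the unconditional content in Section~\ref{General} goes in the opposite direction and is quantitative. The proposition there proves the two-sided estimate $\sigma_{TX}^2\le \text{Var}|TX|^2/\E|TX|^2\le C_1\sigma_{TX}^2+C_2\bigl(\Vert T\Vert_{op}^2/\Vert T\Vert_{HS}^2\bigr)\lambda_{TX}^2$, the second inequality resting on Paouris' deviation estimate rather than on any Poincar\'e-type input; this yields the \emph{equivalence} of Conjecture~\ref{variance} and Conjecture~\ref{ThinShellGeneral}, and, combined with the Gu\'edon--Milman bound $\sigma_{TX}\le C\Vert T\Vert_{op}^{1/3}\Vert T\Vert_{HS}^{2/3}$, gives both conjectures unconditionally with the dimension-dependent constant $Cn^{2/3}$. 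The unconditional absolute-constant results of the paper are confined to the structured examples of Section~\ref{projections} (hyperplane projections of $B_1^n$ and $B_\infty^n$ and, for the cube, their linear images), proved there through Cauchy's formula and a negative square correlation property rather than through any spectral-gap estimate. So your easy direction (variance $\Rightarrow$ thin shell) agrees with the paper; what is missing from your note, and what the paper supplies, is the converse implication and the best known unconditional constant.
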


Notice that since Conjecture \ref{variance} and Conjecture
\ref{ThinShellGeneral} are not invariant under linear maps,
these two conjectures cannot easily be reduced to Conjecture \ref{varianceiso}
 and Conjecture \ref{ThinShellWidthConjecture}. We
will study how these conjectures behave under linear transformations and we will
also prove that random vectors uniformly distributed on a certain family of
polytopes verify Conjecture \ref{variance} but, before stating
our results, let us recall
the results known, up to now, concerning the aformentioned conjectures.

Besides the Gaussian vectors only a few examples  are known to satisfy
Conjecture \ref{KLS}. For instance, the vectors uniformly distributed on
$\ell^n_p$-balls,
$1\leq p\leq
\infty$, the simplex and some revolution convex bodies (\cite{S}, \cite{LW},
\cite{BW}, \cite{H}). In \cite{K4},
Klartag proved Conjecture \ref{KLS} with an
extra  $\log n$ factor for vectors uniformly distributed on unconditional convex
bodies and
recently Barthe and Cordero extended this result for log-concave vectors with
many
symmetries
(see \cite{BC}).  Kannan, Lov\'asz and Simonovits proved Conjecture \ref{KLS}
with the factor
$(\E |X|)^2$  instead of $\lambda_X^2$ (see \cite{KLS}), improved by Bobkov to
$(\text{Var}|X|^2)^{1/2}\simeq
\sigma_X\,\E |X|$ (see
\cite{B3}).

In \cite{K3}, Klartag proved Conjecture \ref{varianceiso}
for random vectors uniformly distributed on isotropic unconditional convex
bodies. The best known (dimension dependent) bound for general log-concave
isotropic random vectors in Conjecture \ref{ThinShellWidthConjecture}
was proved by Gu\'{e}don and Milman  with a factor
$n^{1/3}$ instead of  $C$,
improving down
to $n^{1/4}$ when $X$ is $\psi_2$ (see, \cite{GM}). This results give better
estimates than previous  ones by Klartag (see
\cite{K2}) and Fleury (see \cite{F}).
Given the relation between Conjecture \ref{varianceiso} and Conjecture
{\ref{ThinShellWidthConjecture} we have that Conjecture \ref{varianceiso} is
known to be true
with an
extra factor
$n^{2/3}$.

Very recently  Eldan, (\cite{E}) obtained a
breakthrough showing that Conjecture \ref{ThinShellWidthConjecture} implies
Conjecture \ref{KLS} with an extra logarithmic factor. By using the result of
Gu\'edon-Milman,
Conjecture \ref{KLS} is obtained with an extra factor $n^{2/3}(\log n)^2 $.

Since the variance conjecture is not linearly invariant, in Section
\ref{General} we will study its behavior under linear transformations {\it
i.e.}, given a centered log-concave random vector $X$, we will study the
variance conjecture of the random vector $TX$, $T\in GL(n)$. We will prove that
if $X$ is an isotropic random vector verifying Conjecture \ref{varianceiso},
then the non-isotropic $T\circ U(X) $ verifies  the variance
 conjecture (\ref{variance})
for a typical $U\in O(n)$. We will also show
the equivalence between Conjecture \ref{variance} and Conjecture \ref{ThinShellGeneral}.
As a consequence of Gu\'edon and Milman's result we
obtain that every centered log-concave random vector verifies the variance
conjecture with constant $Cn^\frac{2}{3}$ rather than the
 $Cn^\frac{2}{3}(\log n)^2$
obtained from the best general known result in Conjecture \ref{KLS}.

The main results in this paper will be included in Section \ref{projections},
where we will show that random vectors uniformly disitributed on hyperplane
projections of $B_1^n$ or $B_\infty^n$ (the unit balls of $\ell_1^n$ and
$\ell_\infty^n$ respectively) verify Conjecture \ref{variance}. Furthermore, in
the case of the hyperplane projections of $B_\infty^n$ we will see that they
verify a negative square correlation property with respect to any orthonormal
basis, which will allow us to deduce that also a random vector uniformly
distributed on any linear image of any hyperplane projection of $B_\infty^n$
will verify Conjecture \ref{variance}.

In order to compute some quantities on the hyperplane projections of $B_1^n$ and
$B_\infty^n$ we will use Cauchy's formula which, in the case of polytopes can be
stated like this:

Let $K_0$ be a polytope with facets $\{F_i\,:\, i\in I\}$ and $K=P_HK_0$ be the
projection of $K_0$ onto a hyperplane. If $X$ is a random vector uniformly
distributed on $K$, for any integrable function $f:K\to\R$ we have
$$
\E f(X)=\sum_{i\in I}\frac{\Vol(P_H(F_i))}{\Vol(K)}\E f(P_H Y^i),
$$
where $Y^i$ is a random vector uniformly distributed on the facet $F_i$ and
$\Vol$ denotes the volume or Lebesgue measure.

Let us now introduce some notation. Given a convex body $K$, we will denote by
$\widetilde{K}$ its homothetic image of volume 1 ($\Vol(\widetilde{K})=1$).
$\widetilde{K}=\frac{K}{\Vol^\frac{1}{n}(K)}$.
We recall that a convex body $K\subset\R^n$ is \emph{isotropic} if
it has volume $\Vol(K)=1$, the barycenter of $K$ is at the origin and its
inertia matrix is a multiple
of the identity. Equivalently, there exists a constant $L_K>0$
called isotropy constant of $K$ such that $L_K^2=\int_K\langle
x,\theta\rangle^2\, dx, \forall \theta\in S^{n-1}$. In this case
if $X$ denotes a random vector uniformly distributed on $K$, $\lambda_X=L_K$.
Thus, $K$ is
isotropic if the random vector $X$, distributed on $L_K^{-1}K$ with density
$ L^n_K \chi_{L_K^{-1}K}$
is isotropic.

When we write $a\sim b$,
for $a,b>0$, it means that the quotient of $a$ and $b$ is bounded from above and
from bellow by
absolute constants. $O(n)$ will always denote the orthogonal group on $\R^n$.

\section{General results}\label{General}

In this section we are going to consider the variance conjecture for linea\cite{K1},r
transformatios of a fixed centered log-concave random vector in $\R^n$. Our
first result shows that if such random vector is
not far from being isotropic and verifies the variance conjecture, then the
average perturbation (in the sense we will state in the proposition) will also
verify the variance conjecture.

 \begin{proposition}\label{PropositionAverage}
Let $X$ be a centered isotropic, log-concave random vector in $\R^n$ verifying the
variance conjecture with constant $C_1$. Let $T\in GL(n)$ be any linear transformation.
If  $U$ is a random map uniformly distributed in $O(n)$ then
\[
 \E_U\text{Var } |T\circ U(X)|^2\leq CC_1\Vert T\Vert_{op}^2\Vert T\Vert_{HS}^2=
CC_1\lambda_{T\circ u(X)}^2\E|T\circ u(X)|^2
\]
for any $u\in O(n)$,
where $C$ is an absolute constant.
 \end{proposition}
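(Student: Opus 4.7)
The plan is to exploit the isotropy of $X$ together with the rotational invariance of Haar measure on $O(n)$, so as to reduce $\E_U\text{Var}|T\circ U(X)|^2$ to a quartic spherical integral controlled by the singular spectrum of $T$. First, isotropy gives that for every $U\in O(n)$, $\E|TUX|^2 = \mathrm{tr}(U^{\ast}T^{\ast}TU) = \Vert T\Vert_{HS}^2$ and $\lambda_{TUX}^2 = \sup_\xi\E\langle X,U^{\ast}T^{\ast}\xi\rangle^2 = \Vert T\Vert_{op}^2$; in particular the right hand side $\lambda_{Tu(X)}^2\E|Tu(X)|^2 = \Vert T\Vert_{op}^2\Vert T\Vert_{HS}^2$ is $u$-independent, and it suffices to bound $\E_U\text{Var}|TUX|^2$ by $CC_1$ times this product.

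Writing $|TUX|^2 = \langle A_U X,X\rangle$ with $A_U = U^{\ast}T^{\ast}TU$, I would split $A_U = \frac{\Vert T\Vert_{HS}^2}{n}I_n + B_U$ where $\mathrm{tr}(B_U)=0$. This yields the pointwise identity
\[|TUX|^2 - \Vert T\Vert_{HS}^2 = \frac{\Vert T\Vert_{HS}^2}{n}(|X|^2-n) + \langle B_U X,X\rangle,\]
and squaring with $(a+b)^2 \leq 2a^2+2b^2$ gives
\[\text{Var}|TUX|^2 \leq \frac{2\Vert T\Vert_{HS}^4}{n^2}\text{Var}|X|^2 + 2\E_X\langle B_U X,X\rangle^2.\]
The first summand is bounded by $2C_1\Vert T\Vert_{op}^2\Vert T\Vert_{HS}^2$ using the hypothesis $\text{Var}|X|^2\leq C_1 n$ and the trivial inequality $\Vert T\Vert_{HS}^2\leq n\Vert T\Vert_{op}^2$.

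For the second summand I would swap the order of integration and average in $U$ first. With $X$ fixed, rotation invariance of Haar measure makes $UX$ uniform on $|X|S^{n-1}$, so for $\theta\in S^{n-1}$ uniform one has $\E_U\langle B_U X,X\rangle^2 = |X|^4\,\text{Var}_\theta|T\theta|^2$. Diagonalising $T^{\ast}T$ with singular values $s_1,\dots,s_n$ and invoking the standard sphere moments $\E\theta_i^4 = \frac{3}{n(n+2)}$, $\E\theta_i^2\theta_j^2 = \frac{1}{n(n+2)}$ for $i\neq j$ one obtains
\[\text{Var}_\theta|T\theta|^2 \leq \frac{2\sum_{i} s_i^4}{n(n+2)} \leq \frac{2\Vert T\Vert_{op}^2\Vert T\Vert_{HS}^2}{n^2}.\]
Taking $\E_X$ and invoking the variance conjecture once more in the weak form $\E|X|^4 \leq (\E|X|^2)^2 + \text{Var}|X|^2 \leq (1+C_1)n^2$ bounds the second summand by $C(1+C_1)\Vert T\Vert_{op}^2\Vert T\Vert_{HS}^2$. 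Adding the two pieces finishes the proof.

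The only technically delicate step is the quartic spherical moment computation, which is routine after the singular value decomposition of $T$. The conceptual heart of the argument is the splitting $A_U = \frac{\Vert T\Vert_{HS}^2}{n}I_n + B_U$: it decouples the fluctuation of $|X|^2$ (handled by the variance conjecture for $X$) from the $U$-fluctuation (handled by sphere integration), and ensures that the residual quadratic form $\langle B_U X,X\rangle$ has zero expectation.
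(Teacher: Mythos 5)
Your proof is correct, but it follows a genuinely different route from the paper's. The paper takes the singular value decomposition $T=V\Lambda U_1$, writes $|T\circ U(X)|^2=\sum_i\lambda_i^2\langle X,\eta_i\rangle^2$ in a Haar-random orthonormal basis $\{\eta_i\}$, and then averages the diagonal and off-diagonal terms of the variance separately, which requires iterated integrals over nested spheres $S^{n-1}\cap\eta_1^\perp\cap\dots$ and an auxiliary independent copy $Y$ of $X$ to handle the cross terms $\E\langle X,\eta_i\rangle^2\langle X,\eta_j\rangle^2-\E\langle X,\eta_i\rangle^2\E\langle X,\eta_j\rangle^2$. You instead split the quadratic form $A_U=U^{\ast}T^{\ast}TU$ into its trace part and a traceless remainder $B_U$, so that the radial fluctuation $\frac{\Vert T\Vert_{HS}^2}{n}(|X|^2-n)$ is controlled directly by the hypothesis $\text{Var}|X|^2\leq C_1 n$, while the term $\E_U\E_X\langle B_UX,X\rangle^2$ is handled by conditioning on $X$ and using that $UX/|X|$ is uniform on $S^{n-1}$, which reduces everything to the single standard computation of $\text{Var}_\theta|T\theta|^2$ via the moments $\E\theta_i^4=\frac{3}{n(n+2)}$, $\E\theta_i^2\theta_j^2=\frac{1}{n(n+2)}$, together with $\E|X|^4\leq(1+C_1)n^2$; I checked the sphere variance bound $\text{Var}_\theta|T\theta|^2\leq\frac{2}{n(n+2)}\sum_i s_i^4\leq\frac{2}{n^2}\Vert T\Vert_{op}^2\Vert T\Vert_{HS}^2$ and it is right. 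What your approach buys is brevity and transparency: no flag integrals, no independent copy, and a clean conceptual separation between the radial fluctuation of $X$ and the spectral fluctuation of the random direction; the only price is the harmless factor $2$ from $(a+b)^2\leq 2a^2+2b^2$. Note that your final step, bounding $(1+C_1)$ by $CC_1$, implicitly assumes $C_1$ is bounded below by an absolute constant; this is the same harmless normalization the paper makes when it writes $C_1n+n^2\leq CC_1n^2$, and can be dispensed with by assuming $C_1\geq 1$ without loss of generality.
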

\begin{proof} The non singular linear map $T$ can be expressed
by $T=V\Lambda U_1$ where
$V, U_1\in O(n)$  and $\Lambda=[\lambda_1,\dots,
\lambda_n]$ ( $\lambda_i>0$) a diagonal map.\\
\indent Given $\{e_i\}_{i=1}^n$ the canonical basis in $\R^n$, we will identify every
$U\in O(n)$ with the orthonormal basis $\{\eta_i\}_{i=1}^n$ such that
$U_1U\eta_i=e_i$ for all $i$.
Thus, by uniqueness of the Haar measure invariant under the
action of $O(n)$ we have that, for any integrable function $F$
\begin{eqnarray*}
\E_UF(U)&=&\E_U
F(\eta_1,\dots,\eta_n)\cr
&=&\int_{S^{n-1}}\int_{S^{n-1}\cap\eta_1^\perp}\dots\int_{S^
{n-1}\cap\eta_1^\perp\cap\dots\cap\eta_{n-1}^\perp}F(\eta_1,\dots,\eta_n)d\nu(\eta_
n)\dots d\nu(\eta_1),
\end{eqnarray*}
where $d\nu(\eta_i)$ is the Haar probability measure on
$S^{n-1}\cap\eta_1^\perp\cap\dots\cap\eta_{i-1}^\perp$.
Then, since $\displaystyle{|T\circ U(X)|^2=|\Lambda U_1 UX|^2=\sum_{i=1}^n\langle \Lambda
U_1UX,e_i\rangle^2=\sum_{i=1}^n\lambda_i^2\langle X,\eta_i\rangle^2}$
\begin{eqnarray*}
 \E_U\text{Var } |T\circ U(X)|^2&=&\sum_{i=1}^n\lambda_i^4\E_U(\E\langle
X,\eta_i\rangle^4-(\E\langle X,\eta_i\rangle^2)^2)\cr
&+&\sum_{i\neq j}\lambda_i^2\lambda_j^2\E_U(\E\langle X,\eta_i\rangle^2\langle
X,\eta_j\rangle^2-\E\langle X,\eta_i\rangle^2\E\langle X,\eta_j\rangle^2).
\end{eqnarray*}

Since for every $i$
\begin{eqnarray*}
\E_U(\E\langle
X,\eta_i\rangle^4-(\E\langle X,\eta_i\rangle^2)^2)&\leq& \E_U\E\langle
X,\eta_i\rangle^4
=\E |X|^4 \int_{S^{n-1}}\langle e_1,\eta_1\rangle^4d\nu(\eta_1)\cr
&=&\frac{3}{n(n+2)}\E |X|^4,
\end{eqnarray*}
and for every $i\neq j$, denoting by $Y$ an independent copy of $X$,
\begin{eqnarray*}
&&\E_U(\E\langle X,\eta_i\rangle^2\langle
X,\eta_j\rangle^2-\E\langle X,\eta_i\rangle^2\E\langle X,\eta_j\rangle^2)\cr
&&=\E|X|^4\int_{S^{n-1}}\left\langle
\frac{X}{|X|},\eta_1\right\rangle^2\int_{S^{n-1}\cap\eta_1^\perp}\left\langle
\frac{X}{|X|},\eta_2\right\rangle^2d\nu(\eta_2)d\nu(\eta_1)\cr
&&-\E|X|^2|Y|^2\int_{S^{n-1}}\left\langle\frac{X}{|X|},\eta_1\right\rangle^2\int_
{ S^{n-1}\cap\eta_1^\perp}\left\langle\frac{Y}{|Y|},
\eta_2\right\rangle^2d\nu(\eta_2)d\nu(\eta_1)\cr
&&=\frac{\E|X|^4}{n-1}\left(\frac{1}{n}-\int_{S^{n-1}}\langle
e_1,\eta_1\rangle^4d\nu(\eta_1)\right)\cr
&&-\frac{\E|X|^2|Y|^2}{n-1}\left(\frac{1}{n}-\int_{S^{n-1}}\left\langle\frac{X}{
|X|},\eta_1\right\rangle^2\left\langle\frac{Y}{|Y|},
\eta_1\right\rangle^2d\nu(\eta_1)\right)\cr
&&=\frac{\E|X|^4}{n-1}\left(\frac{1}{n}-\frac{3}{n(n+2)}\right)\cr
&&-\frac{\E|X|^2|Y|^2}{n-1}\left(\frac{1}{n}-\frac{1}{n(n+2)}-\frac{2}{n(n+2)}
\left\langle\frac{X}{
|X|},\frac{Y}{|Y|}\right\rangle^2\right)\cr
\end{eqnarray*}
we have that
\begin{eqnarray*}
 &&\E_U\text{Var } |T\circ U(X)|^2\leq\frac{3}{n(n+2)}\E |X|^4\sum_{i=1}^n\lambda_i^4\cr
&&+\left(\frac{\E|X|^4-(\E|X|^2)^2}{n(n+2)}-\frac{2\E|X|^2|Y|^2}{(n-1)n(n+2)}
\left(1-\left\langle\frac{X}{
|X|},\frac{Y}{|Y|}\right\rangle^2\right)\right)\sum_{i\neq
j}\lambda_i^2\lambda_j^2\cr
&&\leq\frac{3}{n(n+2)}\E
|X|^4\sum_{i=1}^n\lambda_i^4+\frac{\text{Var }|X|^2}{n(n+2)}\sum_{i\neq
j}\lambda_i^2\lambda_j^2.
\end{eqnarray*}

Now, since for every $\theta\in S^{n-1}$,
$
 \E\langle X,\theta\rangle^2=1
$ and $X$ satisfies the variance conjecture with constant $C_1$,
we have
$$
\E|X|^4=Var|X|^2+(\E|X|^2)^2\leq C_1n+n^2\leq CC_1 n^2.
$$ and
\begin{eqnarray*}
\E_U\text{Var } |T\circ U(X)|^2&\leq& C C_1\sum_{i=1}^n\lambda_i^4+
\frac{C_1}{n}\sum_{i\neq j}\lambda_i^2\lambda_j^2\cr
\end{eqnarray*}
Hence, given any $u\in O(n)$, let $\{\nu_i\}_{i=1}^n$ be
the orthonormal basis defined by $\nu_i=U_1\circ u(e_1)$, for all $ i$. Then
we have
\begin{eqnarray*}
 \lambda_{T\circ u(X)}^2&=&\sup_{\theta\in S^{n-1}}\E\langle
T\circ u(X),\theta\rangle^2=\sup_{\theta\in S^{n-1}}\E\langle \Lambda U_1
uX,\theta\rangle^2\\
&=&\sup_{\theta\in S^{n-1}}\E\left(\sum_{i=1}^n\lambda_i\langle X,\nu_i\rangle
\langle e_i,\theta\rangle\right)^2\\
&=&\sup_{\theta\in S^{n-1}}\sum_{i=1}^n\lambda_i^2\langle e_i, \theta\rangle^2
=
\max_{1\leq i\leq n}\lambda_i^2=\Vert T\Vert_{op}^2
\end{eqnarray*}and
\begin{eqnarray*}
 \E|T\circ u(X)|^2&=&\sum_{i=1}^n\lambda_i^2\E\langle X,\nu_i\rangle^2
=\sum_{i=1}^n\lambda_i^2=\Vert T\Vert^2_{HS}
\end{eqnarray*}
Thus
\begin{eqnarray*}
\E_U\text{Var } |T\circ U(X)|^2
&\leq&C C_1\Vert T\Vert_{op}^2\Vert T\Vert^2_{HS} +
\frac{C_1}{n}\Vert T\Vert_{op}^2\sum_{i\neq j}\lambda_j^2\cr
&\leq&C C_1\Vert T\Vert_{op}^2\Vert T\Vert^2_{HS} +
C_1\Vert T\Vert_{op}^2\Vert T\Vert^2_{HS}\\
&=&CC_1\Vert T\Vert_{op}^2\Vert T\Vert_{HS}^2=
CC_1\lambda_{T\circ u(X)}^2\E|T\circ u(X)|^2 \\
\end{eqnarray*}
\end{proof}

\begin{rmk}
 The same proof as before can be applied when $X$ is not necessarily isotropic. In this case
$$
\E_U\text{Var } |T\circ U(X)|^2\leq CC_1\lambda_{T\circ u(X)}^2\E|T\circ u(X)|^2
$$
for any $u\in O(n)$, where
 $B$ is the
spectral condition number of its covariance matrix {\it i.e.}
$$\displaystyle{B^2=\frac{\max_{\theta\in S^{n-1}}\E\langle X,\theta\rangle^2}{\min_{\theta\in
S^{n-1}}\E\langle X,\theta\rangle^2}}.$$
\end{rmk}

As a consequence of Markov's inequality we obtain the following
\begin{cor}
 Let $X$ be an isotropic, log-concave random vector in $\R^n$ verifying
the variance conjecture with constant $C_1$. There exists an absolut constant
$C$ such that  the measure of the
set of  orthogonal operators $U$ for which the random vector
$T\circ U(X)$ verifies the variance conjecture with constant $CC_1$ is
greater than $\frac{1}{2}$.

\end{cor}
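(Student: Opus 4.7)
The plan is to derive this corollary from Proposition \ref{PropositionAverage} by a one-line application of Markov's inequality. The essential observation that makes this work is that the upper bound in Proposition \ref{PropositionAverage}, namely $CC_1\|T\|_{op}^2\|T\|_{HS}^2 = CC_1\lambda_{T\circ u(X)}^2\E|T\circ u(X)|^2$, does not depend on the choice of $u\in O(n)$. In particular, the quantities $\lambda_{T\circ U(X)}^2$ and $\E|T\circ U(X)|^2$ are deterministic (independent of $U$) whenever $X$ is isotropic, as was verified at the end of the proof of the proposition.

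First I would write, using Proposition \ref{PropositionAverage} with $u = U$,
$$\E_U\, \text{Var}|T\circ U(X)|^2 \;\leq\; C'C_1\, \lambda_{T\circ U(X)}^2\, \E|T\circ U(X)|^2,$$
where $C'$ is the absolute constant supplied by the proposition and the right-hand side is in fact a constant (not a random variable in $U$). Since $\text{Var}|T\circ U(X)|^2$ is a nonnegative measurable function of $U$, Markov's inequality applied with threshold $2C'C_1\lambda_{T\circ U(X)}^2\E|T\circ U(X)|^2$ gives
$$\Pro_U\!\left(\text{Var}|T\circ U(X)|^2 > 2C'C_1\, \lambda_{T\circ U(X)}^2\, \E|T\circ U(X)|^2\right) \;\leq\; \frac{1}{2}.$$
Taking complements and setting $C = 2C'$, the set of $U\in O(n)$ for which $T\circ U(X)$ satisfies the variance conjecture with constant $CC_1$ has Haar measure at least $\frac{1}{2}$, which is the claim.

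There is no real obstacle here, since all the work has already been done in Proposition \ref{PropositionAverage}; the only point to check is that the bounding quantity on the right is genuinely deterministic, so that Markov's inequality produces a clean probability estimate rather than a bound involving an auxiliary random variable. This independence of $U$ is precisely the content of the final computations of $\lambda_{T\circ u(X)}^2 = \|T\|_{op}^2$ and $\E|T\circ u(X)|^2 = \|T\|_{HS}^2$ in the proof above, and crucially uses the isotropy hypothesis on $X$.
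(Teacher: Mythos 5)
Your proof is correct and is exactly the argument the paper intends: the corollary is stated there simply as ``a consequence of Markov's inequality'' applied to Proposition \ref{PropositionAverage}, and you have filled in the same one-line deduction, correctly noting that $\lambda_{T\circ U(X)}^2\E|T\circ U(X)|^2=\Vert T\Vert_{op}^2\Vert T\Vert_{HS}^2$ is independent of $U$ by isotropy, so Markov's inequality with threshold twice the average gives a set of measure at least $\frac{1}{2}$.
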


In \cite{GM} it was shown that every log-concave isotropic random vector
verifies the thin-shell width conjecture with constant $C_1=Cn^\frac{1}{3}$. Also,
an estimate for $\sigma_X$ was given when  $X$ is not isotropic.

The following proposition is well known for the experts. However we include here
for the sake of completeness.  As a consequence and, by using
 the result in \cite{GM}, we will obtain that every centered
log-concave vector, isotropic or not, verifies the variance conjecture
 with constant
$Cn^\frac{2}{3}$ rather than $Cn^\frac{2}{3}(\log n)^2$.

\begin{proposition}
 Let $X$ be an isotropic log-concave random vector, $T$ a linear map and
$\sigma_{TX}$ the thin-shell width of the random vector $TX$ {\it i.e.}
$$
\sigma_{TX}^2=\E\left||TX|-(\E|TX|^2)^\frac{1}{2}\right|^2.
$$
Then
$$
\sigma_{TX}^2\leq\frac{\text{Var }|TX|^2}{\E|TX|^2}\leq C_1\sigma_{TX}^2+
C_2\frac{\Vert
T\Vert_{op}^2}{\Vert
T\Vert_{HS}^2}\lambda_{TX}^2.
$$
\end{proposition}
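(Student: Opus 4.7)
The plan is to exploit the algebraic identity
\[
\operatorname{Var}|TX|^2 = \E(|TX|^2 - m^2)^2 = \E(|TX|-m)^2(|TX|+m)^2,
\]
where $m := \sqrt{\E|TX|^2}$. The first (lower) inequality follows immediately from the fact that $|TX|\geq 0$, which gives $(|TX|+m)^2 \geq m^2$ pointwise; inserting this into the identity yields $\operatorname{Var}|TX|^2 \geq m^2 \sigma_{TX}^2 = \E|TX|^2\cdot\sigma_{TX}^2$, i.e.\ the left inequality after dividing by $\E|TX|^2 = m^2$.

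For the upper inequality the natural move is to expand $(|TX|+m)^2 = ((|TX|-m)+2m)^2 \leq 2(|TX|-m)^2 + 8m^2$. Plugging this pointwise bound into the identity and integrating gives
\[
\operatorname{Var}|TX|^2 \leq 2\,\E(|TX|-m)^4 + 8\,m^2\,\sigma_{TX}^2.
\]
After dividing by $\E|TX|^2 = \|T\|_{HS}^2$ (using that $X$ is isotropic), the second term contributes exactly $8\sigma_{TX}^2$, which is the $C_1\sigma_{TX}^2$ part of the stated bound. Since $\lambda_{TX}^2=\|T\|_{op}^2$ for isotropic $X$, the problem reduces to establishing the fourth-moment estimate
\[
\E(|TX|-m)^4 \;\leq\; C\,\|T\|_{op}^2\,\lambda_{TX}^2 \;=\; C\,\|T\|_{op}^4,
\]
which then produces the $C_2\,\|T\|_{op}^2\,\lambda_{TX}^2/\|T\|_{HS}^2$ correction after division by $\|T\|_{HS}^2$.

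The hard part is this fourth-moment estimate. Intuitively, it asserts that $|TX|$ concentrates around $m$ at the Lipschitz scale $\|T\|_{op}$ of the seminorm $x\mapsto |Tx|$, rather than at the much larger scale $\|T\|_{HS}$ one would obtain from a naive Kahane--Khintchine-type bound for log-concave vectors. My approach would be to use that $x\mapsto |Tx|$ is convex and $\|T\|_{op}$-Lipschitz, together with Borell/Bobkov-type concentration of convex Lipschitz functions under a log-concave measure; combined with the standard fact that for a log-concave $1$-dimensional variable the mean and median differ by at most $O(\sigma_{TX})$, this would deliver the required $L^4$-bound on $|TX|-m$ and close the argument.
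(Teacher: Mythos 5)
Your first inequality is fine and is exactly the paper's argument: since $(|TX|+m)^2\geq m^2$ pointwise (with $m=(\E|TX|^2)^{1/2}$), the identity $\text{Var}\,|TX|^2=\E(|TX|-m)^2(|TX|+m)^2$ gives $\sigma_{TX}^2\leq \text{Var}\,|TX|^2/\E|TX|^2$.

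The upper bound, however, has a genuine gap at the step you yourself flag as ``the hard part''. Your expansion $(|TX|+m)^2\leq 2(|TX|-m)^2+8m^2$ reduces the claim to $\E(|TX|-m)^4\leq C\Vert T\Vert_{op}^4$, and this estimate is not something you can prove with the tools you invoke: taking $T=I_n$ it reads $\E\bigl(|X|-\sqrt n\bigr)^4\leq C$, which implies $\sigma_X\leq C$, i.e.\ the thin-shell conjecture (Conjecture \ref{ThinShellWidthConjecture}), precisely the open problem lurking behind this whole paper. Borell/Bobkov-type concentration of convex Lipschitz functions of a log-concave vector only yields fluctuations at a scale governed by the Poincar\'e (KLS) constant, not at the Lipschitz scale $\Vert T\Vert_{op}$; and Borell's lemma controls the seminorm $|TX|$ at scale $\E|TX|\sim\Vert T\Vert_{HS}$. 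The only dimension-free input available at scale $\Vert T\Vert_{op}$ is one-sided: Paouris' theorem, $\Pro\{|TX|>ctm\}\leq e^{-t\Vert T\Vert_{HS}/\Vert T\Vert_{op}}$ for $t\geq 1$, which controls the upper tail above a constant multiple of $m$ but says nothing about small or downward deviations. This is exactly how the paper proceeds: it keeps the product $(|TX|-m)^2(|TX|+m)^2$ and splits on the event $\{|TX|\leq Bm\}$, where $(|TX|+m)^2\leq(1+B)^2m^2$ produces the $C_1\sigma_{TX}^2\,\E|TX|^2$ term, while on the complement it bounds the integrand by $|TX|^4$ and uses Paouris to get $\E|TX|^4\chi_{\{|TX|>Bm\}}\leq C\Vert T\Vert_{op}^4$. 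Your decomposition could in fact be repaired the same way: the weaker bound $\E(|TX|-m)^4\leq Cm^2\sigma_{TX}^2+C\Vert T\Vert_{op}^4$ (which suffices after dividing by $m^2$) follows from the truncation at $Bm$ plus Paouris; but as written, your plan hinges on an estimate of open-problem strength, so the proof does not close.
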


\begin{proof}
The first inequality is clear, since
\begin{eqnarray*}
 \sigma_{TX}^2&=&\E\left||TX|-(\E|TX|^2)^\frac{1}{2}\right|^2\leq
\E\left||TX|-(\E|TX|^2)^\frac{1}{2}\right|^2\frac{\left||TX|+(\E|TX|^2)^\frac{1}
{2}\right|^2}{\E|TX|^2}\cr
&=&\frac{\text{Var }|TX|^2}{\E|TX|^2}.
\end{eqnarray*}
Let us now show the second inequality. Let $B>0$ to be chosen later.
\begin{eqnarray*}
 \text{Var }|TX|^2&=&\E\left||TX|^2-\E|TX|^2\right|^2\chi_{\left\{|TX|\leq
B(\E|TX|^2)^\frac{1}{2}\right\}}\cr
&+&\E\left||TX|^2-\E|TX|^2\right|^2\chi_{\left\{|TX|>
B(\E|TX|^2)^\frac{1}{2}\right\}}.\cr
\end{eqnarray*}
The first term equals
\begin{eqnarray*}
&&\E\left||TX|+(\E|TX|^2)^\frac{1}{2}\right|^2\left||TX|-(\E|TX|^2)^\frac{1}{2
} \right|^2\chi_{\left\{|TX|\leq B(\E|TX|^2)^\frac{1}{2}\right\}}\cr
&&\leq(1+B)^2\sigma_{TX}^2\E|TX|^2.
\end{eqnarray*}
If $B\geq\frac{1}{\sqrt 2}$, the second term verifies
\begin{eqnarray*}
&&\E\left||TX|^2-\E|TX|^2\right|^2\chi_{\left\{|TX|>
B(\E|TX|^2)^\frac{1}{2}\right\}}\leq\E|TX|^4\chi_{\left\{|TX|>
B(\E|TX|^2)^\frac{1}{2}\right\}}\cr
\end{eqnarray*}
By Paouris' strong estimate for log-concave isotropic probabilities (see
\cite{Pa}) there exists an absolute constant $c$ such that
$$
\Pro\{|TX|>ct(\E|TX|^2)^\frac{1}{2}\}\leq e^{-t\frac{\Vert T\Vert_{HS}}{\Vert
T\Vert_{op}}}\hspace{1cm}\forall t\geq 1.
$$
Choosing $B=\max\left\{c,\frac{1}{\sqrt 2}\right\}$ we have that the
second term is bounded from above by
\begin{eqnarray*}
 &&\E|TX|^4\chi_{\{|TX|>B(\E|TX|^2)^\frac{1}{2}\}}=
B^4(\E|TX|^2)^2\Pro\{|TX|>B(\E|TX|^2)^\frac{1}{2}\}\cr
&&+B^4(\E|TX|^2)^2\int_1^\infty 4t^3\Pro\{|TX|>Bt(\E|TX|^2)^\frac{1}{2}\}dt\cr
&&\leq B^4\Vert T\Vert_{op}^4\frac{\Vert T\Vert_{HS}^4}{\Vert
T\Vert_{op}^4}e^{-\frac{\Vert T\Vert_{HS}}{\Vert T\Vert_{op}}}+ B^4\Vert
T\Vert_{HS}^4\int_1^\infty4t^3e^{-t\frac{\Vert T\Vert_{HS}}{\Vert
T\Vert_{op}}}dt\cr
&&\leq C_2\Vert T\Vert_{op}^4.
\end{eqnarray*}

Hence, we achieve
\begin{eqnarray*}
\frac{\text{Var }|TX|^2}{\E|TX|^2}&\leq&\sigma_{TX}^2\left(1+B\right)^2+
C_2\frac{\Vert
T\Vert_{op}^4}{\Vert
T\Vert_{HS}^2}\cr
&\leq&C_1\sigma_{TX}^2+ C_2\frac{\Vert T\Vert_{op}^4}{\Vert
T\Vert_{HS}^2}\cr
&=& C_1\sigma_{TX}^2+ C_2\frac{\Vert T\Vert_{op}^2}{\Vert
T\Vert_{HS}^2}\lambda_{TX}^2.
\end{eqnarray*}

\end{proof}
As a consequence of this proposition we obtain that Conjecture
\ref{variance} and Conjecture \ref{ThinShellGeneral} are equivalent.
Combining it with the estimate of
$\sigma_{TX}$ given in \cite{GM} we obtain the following
\begin{cor}
 There exists an absolute constant $C$ such that
 for every log-concave isotropic
random vector $X$ and any linear map $T\, \in GL(n)$ we have
$$\sigma_{TX}\leq C_1\lambda_{TX}\qquad\Longrightarrow\enspace
\text{Var }|TX|^2\leq C_1\E|TX|^2$$
and
$$\text{Var }|TX|^2\leq C_2\E|TX|^2\qquad\Longrightarrow\enspace
\sigma_{TX}\leq C\,C_2\lambda_{TX}.$$
Moreover, both inequalities are true with $C_2=Cn^{2/3}$.

\end{cor}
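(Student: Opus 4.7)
The plan is to read the Corollary as a direct bookkeeping consequence of the preceding Proposition, and then to combine its forward implication with the Gu\'edon--Milman thin-shell estimate in order to obtain the quantitative $Cn^{2/3}$ conclusion. Throughout I will use that because $X$ is isotropic one has, exactly as computed at the end of the proof of Proposition \ref{PropositionAverage},
\[
\E|TX|^2=\Vert T\Vert_{HS}^2\qquad\text{and}\qquad \lambda_{TX}^2=\Vert T\Vert_{op}^2,
\]
so the coefficient $\Vert T\Vert_{op}^2/\Vert T\Vert_{HS}^2$ that appears in the previous Proposition equals $\lambda_{TX}^2/\E|TX|^2$.

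For the forward implication I would start from the upper half of the Proposition,
\[
\frac{\text{Var }|TX|^2}{\E|TX|^2}\leq C_1\sigma_{TX}^2+C_2\,\frac{\Vert T\Vert_{op}^2}{\Vert T\Vert_{HS}^2}\,\lambda_{TX}^2,
\]
translate the last term into $C_2\lambda_{TX}^4/\E|TX|^2$ using the identities above, and plug in the hypothesis $\sigma_{TX}\leq C_1\lambda_{TX}$; this controls the first summand by $C_1^3\lambda_{TX}^2$. The elementary observation $\lambda_{TX}^2\leq \E|TX|^2$ (the largest eigenvalue of the covariance is dominated by its trace) then lets me absorb the second summand into a single expression of the form $C'\lambda_{TX}^2\,\E|TX|^2$, which is the desired variance estimate.

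For the reverse implication I would only need the trivial half of the Proposition, namely $\sigma_{TX}^2\leq \text{Var }|TX|^2/\E|TX|^2$. Substituting the assumed variance inequality immediately yields $\sigma_{TX}^2\leq C_2\lambda_{TX}^2$ and taking square roots finishes this direction at once.

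Finally, for the ``moreover'' clause I would invoke the Gu\'edon--Milman estimate, which says that for every linear image $TX$ of an isotropic log-concave vector one has $\sigma_{TX}\leq Cn^{1/3}\lambda_{TX}$. Applying the forward implication just proved with $C_1=Cn^{1/3}$ produces $\text{Var }|TX|^2\leq Cn^{2/3}\lambda_{TX}^2\,\E|TX|^2$, i.e.\ the general variance conjecture with constant $Cn^{2/3}$ as claimed. There is no real obstacle in the proof: the only points requiring care are the notational passage between $\Vert T\Vert_{op},\Vert T\Vert_{HS}$ and the intrinsic quantities $\lambda_{TX},\E|TX|^2$ (which needs isotropy of $X$) and the fact that one must quote the \emph{non-isotropic} version of the Gu\'edon--Milman thin-shell bound rather than just the isotropic $\sigma_X\leq Cn^{1/3}$ statement.
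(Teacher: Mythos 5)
Your argument is correct and follows essentially the same route as the paper: both implications are read off from the preceding proposition together with $\Vert T\Vert_{op}\leq\Vert T\Vert_{HS}$ (equivalently $\lambda_{TX}^2\leq\E|TX|^2$, using $\lambda_{TX}^2=\Vert T\Vert_{op}^2$ and $\E|TX|^2=\Vert T\Vert_{HS}^2$ for isotropic $X$), and the $Cn^{2/3}$ bound comes from the Gu\'edon--Milman estimate $\sigma_{TX}\leq C\Vert T\Vert_{op}^{1/3}\Vert T\Vert_{HS}^{2/3}$ combined with $\Vert T\Vert_{HS}\leq\sqrt n\,\Vert T\Vert_{op}$, exactly as in the paper. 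The only slip is cosmetic: the first summand is bounded by $C\,C_1^2\lambda_{TX}^2$ (the proposition's constant is absolute, so it should not be merged with your $C_1$ into ``$C_1^3$''), and it is precisely this quadratic dependence on $C_1$ that your final step with $C_1=Cn^{1/3}$ uses to land on $n^{2/3}$ rather than $n$.
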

\begin{proof} The two implications are a direct consequence of the
previous proposition and the fact that $\Vert T\Vert_{op}\leq\Vert T\Vert_{HS}$.
 It was proved in \cite{GM} that
$$
\sigma_{TX}\leq C\Vert T\Vert_{op}^\frac{1}{3}\Vert T\Vert_{HS}^\frac{2}{3}.
$$
Thus, by the previous proposition
\begin{eqnarray*}
 \frac{\text{Var }|TX|^2}{\E|TX|^2}&\leq&C_1\sigma_{TX}^2+ C_2\frac{\Vert
T\Vert_{op}^2}{\Vert
T\Vert_{HS}^2}\lambda_{TX}^2\cr
&\leq&C\lambda_{TX}^2\left(\frac{\Vert T\Vert_{HS}^\frac{4}{3}}{\Vert
T\Vert_{op}^\frac{4}{3}}+\frac{\Vert T\Vert_{op}^2}{\Vert
T\Vert_{HS}^2}\right)\cr
&\leq&C\lambda_{TX}^2\frac{\Vert T\Vert_{HS}^\frac{4}{3}}{\Vert
T\Vert_{op}^\frac{4}{3}}\leq Cn^\frac{2}{3}\lambda_{TX}^2,
\end{eqnarray*}
since $\Vert T\Vert_{op}\leq\Vert T\Vert_{HS}\leq\sqrt n\Vert T\Vert_{op}$.
\end{proof}

The square negative correlation property appeared in \cite{ABP} in the context
of the central limit problem for convex bodies.

\begin{df}
 Let $X$ be a centered log-concave random vector in $\R^n$ and
$\{\eta_i\}_{i=1}^n$ an orthonormal basis of $\R^n$. We say that $X$ satisfies
the square negative correlation property with respect to $\{\eta_i\}_{i=1}^n$ if
for every $i\neq j$
$$
\E\langle X,\eta_i\rangle^2\langle X,\eta_j\rangle^2\leq\E\langle
X,\eta_i\rangle^2\E\langle X,\eta_j\rangle^2.
$$
\end{df}

In \cite{ABP}, the authors showed that a random vector uniformly distributed on
$B_p^n$ satisfies the square negative correlation property with respect to the
canonical basis of $\R^n$. The same property was proved for random vectors
uniformly distributed on generalized Orlicz balls in \cite{W}, where it was also
shown that this property does not hold in general, even in the class of random
vectors uniformly distributed on 1-symmetric convex bodies.

It is easy to see
that if a random centered log-concave vector $X$ satisfies the
square negative correlation property with respect to some
orthonormal basis,
 then it also
satisfies the Conjecture \ref{variance}. Furthermore,
the following proposition shows that, in such
case, also some class of linear perturbations of $X$ verify the Conjecture \ref{variance}.
\begin{proposition}\label{squnegcorr}
 Let $X$ be a centered log-concave random vector in $\R^n$ satisfying the square
negative correlation property with respect to any orthonormal basis, then
the Conjecture \ref{variance} holds for any linear image $T\,\in GL(n)$.
\end{proposition}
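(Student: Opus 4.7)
The plan is to diagonalize $T$ via its singular value decomposition $T = V\Lambda U_1$, with $V, U_1 \in O(n)$ and $\Lambda = \text{diag}(\lambda_1, \dots, \lambda_n)$, $\lambda_i > 0$. Since $V$ is orthogonal, $|TX|^2 = |\Lambda U_1 X|^2$ and $\lambda_{TX} = \lambda_{\Lambda U_1 X}$, so I replace $T$ by $\Lambda U_1$. Following the notational convention already used earlier in the paper, I introduce the orthonormal basis $\{\eta_i\}_{i=1}^n$ defined by $U_1\eta_i = e_i$. Then $\langle \Lambda U_1 X, e_i\rangle = \lambda_i\langle X,\eta_i\rangle$, giving
$$
|TX|^2 = \sum_{i=1}^n \lambda_i^2 \langle X, \eta_i\rangle^2.
$$

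Next I expand the variance and split it into diagonal and off-diagonal contributions:
$$
\text{Var }|TX|^2 = \sum_{i=1}^n \lambda_i^4\,\text{Var }\langle X,\eta_i\rangle^2 + \sum_{i\ne j} \lambda_i^2\lambda_j^2\bigl(\E\langle X,\eta_i\rangle^2\langle X,\eta_j\rangle^2 - \E\langle X,\eta_i\rangle^2\E\langle X,\eta_j\rangle^2\bigr).
$$
By the hypothesis of square negative correlation with respect to \emph{every} orthonormal basis, applied to $\{\eta_i\}_{i=1}^n$, each summand in the second sum is nonpositive and can be discarded. For the diagonal terms, set $\sigma_i^2 := \E\langle X,\eta_i\rangle^2$. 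Since $X$ is centered log-concave in $\R^n$, its marginal $\langle X,\eta_i\rangle$ is a one-dimensional centered log-concave variable, and Borell's lemma gives $\E\langle X,\eta_i\rangle^4 \le C\sigma_i^4$ (an absolute constant $C$), hence $\text{Var }\langle X,\eta_i\rangle^2 \le C\sigma_i^4$. Combining,
$$
\text{Var }|TX|^2 \le C\sum_{i=1}^n \lambda_i^4 \sigma_i^4.
$$

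To close the estimate, I exploit the same diagonal structure to bound $\lambda_{TX}^2$ from below. Indeed, $\E\langle \Lambda U_1 X, e_j\rangle^2 = \lambda_j^2\sigma_j^2$, so
$$
\lambda_{TX}^2 = \sup_{\theta\in S^{n-1}}\E\langle \Lambda U_1 X,\theta\rangle^2 \;\ge\; \max_{1\le j\le n} \lambda_j^2\sigma_j^2,
$$
while $\E|TX|^2 = \sum_i \lambda_i^2\sigma_i^2$. Therefore
$$
\sum_{i=1}^n \lambda_i^4\sigma_i^4 \le \Bigl(\max_{1\le j\le n} \lambda_j^2\sigma_j^2\Bigr)\sum_{i=1}^n \lambda_i^2\sigma_i^2 \le \lambda_{TX}^2\,\E|TX|^2,
$$
which is exactly Conjecture \ref{variance} for $TX$ with an absolute constant.

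The argument is essentially a piece of SVD bookkeeping glued to a one-dimensional fourth-moment bound for log-concave marginals, and I expect no genuine obstacle. The only subtle point is that the hypothesis must be invoked in the basis $\{\eta_i\}$ dictated by the right singular vectors of $T$, which is precisely why the statement requires square negative correlation with respect to \emph{any} orthonormal basis rather than just one distinguished one.
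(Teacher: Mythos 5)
Your proof is correct and follows essentially the same route as the paper's: singular value decomposition $T=V\Lambda U_1$, expansion of $\text{Var}\,|TX|^2$ in the basis $\{\eta_i\}$, discarding the off-diagonal terms by the square negative correlation hypothesis, Borell's lemma for the fourth moments, and the bound $\lambda_i^2\E\langle X,\eta_i\rangle^2\le\lambda_{TX}^2$ to close the estimate. Your explicit verification that $\lambda_{TX}^2\ge\max_j\lambda_j^2\sigma_j^2$ is a minor point the paper leaves implicit, but the argument is the same.
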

\begin{proof} Let $T=V\Lambda U$, with $U, V\in O(n)$ and $\Lambda=[\lambda_1,\dots,
\lambda_n]$ ( $\lambda_i>0$) a diagonal map. Let $\{\eta_i\}_i$ the
orthonormal basis defined by $U\eta_i=e_i$ for all $i$.
By the square negative correlation property
\begin{eqnarray*}
\text{Var } |TX|^2&=&\sum_{i=1}^n\lambda_i^4(\E\langle
X,\eta_i\rangle^4-(\E\langle X,\eta_i\rangle^2)^2)\cr
&+&\sum_{i\neq j}\lambda_i^2\lambda_j^2(\E\langle X,\eta_i\rangle^2\langle
X,\eta_j\rangle^2-\E\langle X,\eta_i\rangle^2\E\langle X,\eta_j\rangle^2)\cr
&\leq&\sum_{i=1}^n\lambda_i^4(\E\langle
X,\eta_i\rangle^4-(\E\langle X,\eta_i\rangle^2)^2)\cr
\end{eqnarray*}
By Borell's lemma (see, for instance,
 \cite{Bor}, Lemma 3.1 or \cite{MS}, Appendix III)
 $$\E\langle
X,\eta_i\rangle^4\leq C(\E\langle X,\eta_i\rangle^2)^2.$$ Thus
\begin{eqnarray*}
\text{Var } |TX|^2&\leq&C\sum_{i=1}^n\lambda_i^4(\E\langle X,\eta_i\rangle^2)^2\leq
C\lambda_{TX}^2\sum_{i=1}^n\lambda_i^2\E\langle X,\eta_i\rangle^2\cr
&=&C\lambda_{TX}^2\E|TX|^2.
\end{eqnarray*}
\end{proof}

\begin{rmk}
Notice that if $X$ satisfies the square
negative correlation property with respect to one orthonormal
 basis $\{\eta_i\}_{i=1}^n$ and $U$ is the orthogonal map such
that $U(\eta_i)=e_i$, the same proof gives that $\Lambda U X$
 verifies Conjecture \ref{variance} for any linear image $\Lambda=[\lambda_1,\dots,
\lambda_n]$ ( $\lambda_i>0$).
\end{rmk}

Even though verifying the variance conjecture is not equivalent to satisfy a
square negative correlation property, the following lemma shows that it is
equivalent to satisfy a ``weak averaged square negative correlation'' property
with respect to one and every orthonormal basis.

\begin{lemma}
 Let $X$ be a centered log concave random vector in $\R^n$. The following are
equivalent
\begin{itemize}
 \item [i)]$X$ verifies the variance conjecture with constant $C_1$
$$
\text{Var }|X|^2\leq C_1\lambda_X^2\E|X|^2.
$$
\item[ii)]$X$ satisfies the following ``weak averaged square negative
correlation'' property with respect to some ortonomal basis $\{\eta_i\}_{i=1}^n$
with constant $C_2$
$$
\sum_{i\neq j}(\E\langle X,\eta_i\rangle^2\langle
X,\eta_j\rangle^2-\E\langle X,\eta_i\rangle^2\E\langle X,\eta_j\rangle^2)\leq
C_2\lambda_X^2\E|X|^2.
$$
\item[iii)]$X$ satisfies the following ``weak averaged square negative
correlation'' property with respect to every ortonomal basis
$\{\eta_i\}_{i=1}^n$ with constant $C_3$
$$
\sum_{i\neq j}(\E\langle X,\eta_i\rangle^2\langle
X,\eta_j\rangle^2-\E\langle X,\eta_i\rangle^2\E\langle X,\eta_j\rangle^2)\leq
C_3\lambda_X^2\E|X|^2,
$$
\end{itemize}
where
$$
C_2\leq C_1\leq C_2+C \hspace{1cm} C_3\leq C_1\leq C_3+C
$$
with $C$ an absolute constant.
\end{lemma}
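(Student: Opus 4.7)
The plan is to work, for an arbitrary orthonormal basis $\{\eta_i\}_{i=1}^n$, with the orthogonal decomposition $|X|^2=\sum_{i=1}^n\langle X,\eta_i\rangle^2$ and the resulting identity
\begin{equation*}
\text{Var }|X|^2 \;=\; \sum_{i=1}^n \text{Var }\langle X,\eta_i\rangle^2 \;+\; \sum_{i\neq j}\bigl(\E\langle X,\eta_i\rangle^2\langle X,\eta_j\rangle^2 - \E\langle X,\eta_i\rangle^2\,\E\langle X,\eta_j\rangle^2\bigr).
\end{equation*}
The second (off-diagonal) sum is exactly the quantity that appears in (ii) and (iii), so the equivalences will follow once the first (diagonal) sum is controlled by $C\lambda_X^2\E|X|^2$ for some absolute constant $C$, uniformly in the basis.

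For this diagonal bound I would invoke Borell's lemma exactly as in the proof of Proposition \ref{squnegcorr}, which yields $\E\langle X,\eta_i\rangle^4 \le C(\E\langle X,\eta_i\rangle^2)^2$. Since $\E\langle X,\eta_i\rangle^2 \le \lambda_X^2$, summing produces
\begin{equation*}
\sum_{i=1}^n \text{Var }\langle X,\eta_i\rangle^2 \;\le\; \sum_{i=1}^n \E\langle X,\eta_i\rangle^4 \;\le\; C\lambda_X^2 \sum_{i=1}^n \E\langle X,\eta_i\rangle^2 \;=\; C\lambda_X^2 \E|X|^2,
\end{equation*}
which holds for every orthonormal basis and supplies the absolute constant $C$ in the statement.

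With these ingredients the three implications are immediate. For (i) $\Rightarrow$ (iii): since the diagonal sum is non-negative, the off-diagonal sum is at most $\text{Var }|X|^2\le C_1\lambda_X^2\E|X|^2$, and this works for every basis, giving $C_3\le C_1$; the implication (iii) $\Rightarrow$ (ii) is trivial with $C_2\le C_3\le C_1$. For (ii) $\Rightarrow$ (i) I would take the particular basis furnished by (ii), insert it into the identity above, and add the diagonal bound to obtain $\text{Var }|X|^2\le (C_2+C)\lambda_X^2\E|X|^2$, i.e.\ $C_1\le C_2+C$; the same argument with (iii) in place of (ii) yields $C_1\le C_3+C$.

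Since the only nontrivial ingredient, Borell's lemma, has already been used a few lines earlier in the paper, I do not expect any real obstacle. The one point worth being careful about is signs: the individual off-diagonal covariance terms may be of either sign, so in (i) $\Rightarrow$ (iii) one genuinely needs the non-negativity of $\sum_i \text{Var }\langle X,\eta_i\rangle^2$ (and not merely a bound on its size) in order to conclude the desired one-sided inequality on the off-diagonal sum.
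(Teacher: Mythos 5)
Your proposal is correct and follows essentially the same route as the paper: the same orthogonal decomposition of $\text{Var }|X|^2$ into diagonal and off-diagonal sums, with Borell's lemma bounding the diagonal part by $C\lambda_X^2\E|X|^2$ uniformly over orthonormal bases, which yields exactly the sandwich $A(\eta)\leq \text{Var }|X|^2\leq C\lambda_X^2\E|X|^2+A(\eta)$ used in the paper. Your extra remark about needing non-negativity of the diagonal sum for the one-sided bound is the same observation implicit in the paper's inequality $A(\eta)\leq\text{Var }|X|^2$.
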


\begin{proof}
 For any orthonormal basis $\{\eta_i\}_{i=1}^n$ we have
\begin{eqnarray*}
\text{Var } |X|^2&=&\sum_{i=1}^n(\E\langle
X,\eta_i\rangle^4-(\E\langle X,\eta_i\rangle^2)^2)\cr
&+&\sum_{i\neq j}(\E\langle X,\eta_i\rangle^2\langle
X,\eta_j\rangle^2-\E\langle X,\eta_i\rangle^2\E\langle X,\eta_j\rangle^2).\cr
\end{eqnarray*}
Denoting by $A(\eta)$ the second term we have, using Borell's lemma, that
$$
A(\eta)\leq \text{Var }|X|^2\leq C\lambda_X^2\E|X|^2+ A(\eta),
$$since
$\displaystyle\sum_{i=1}^n\E\langle
X,\eta_i\rangle^4\leq C\sup_i\E\langle
X,\eta_i\rangle^2
\sum_{i=1}^n\E\langle
X,\eta_i\rangle^2= C\lambda_X^2\E|X|^2.
$
\end{proof}

\section{Hyperplane projections of the cross-polytope and the cube}\label{projections}

In this section we are going to give some new examples of random vectors
verifying the variance conjecture. We will consider the family of random vectors
uniformly distributed on a hyperplane projection of some symmetric isotropic
convex body $K_0$. These random vectors will not necessarily be isotropic.
However, as we will see in the next proposition, they will be almost isotropic.
{\it i.e.} the spectral condition number $B$ of their covariance matrix verifies
$1\leq B\leq C$ for some absolute constant $C$.

\begin{proposition}{\label{sigma}}
Let $K_0\subset\R^n$ be a symmetric isotropic convex body, and let
$H=\theta^\perp$ be
any
hyperplane. Let $K=P_H(K_0)$ and $X$ a random vector uniformly distributed on
$K$.
 Then, for any $\xi\in S_H=\{x\in H; |x|=1\}$
\[
\E\langle X,\xi\rangle^2\sim\frac{1}{\Vol(K)^{1+\frac{2}{n-1}}}\int_{K}
\langle x,\xi\rangle^2dx\sim
L_{K_0}^2.\]
Consequently $\lambda_X\sim L_{K_0}$ and $B(X)\sim 1$.
\end{proposition}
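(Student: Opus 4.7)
The plan is to reduce both equivalences to a Fubini comparison of two probability measures on the projected body $K$. Since $X$ is uniform on $K$, the equality $\E\langle X,\xi\rangle^2 = \Vol(K)^{-1}\int_K\langle y,\xi\rangle^2\,dy$ is immediate, so the proposition reduces to the two claims: \emph{(a)} $\Vol(K)^{-1}\int_K\langle y,\xi\rangle^2\,dy \sim L_{K_0}^2$ uniformly in $\xi\in S_H$, and \emph{(b)} $\Vol(K)^{2/(n-1)}\sim 1$. The stated consequences $\lambda_X\sim L_{K_0}$ and $B(X)\sim 1$ follow at once from the uniformity in $\xi$ of (a).

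Introduce the fiber-length function $h:K\to\R_{\geq 0}$, $h(y) = \Vol_1(K_0\cap(y+\R\theta))$. By Brunn's principle applied in the 2D planes containing $\theta$, $h$ is concave on $K$; the symmetry of $K_0$ forces $h$ to be even with $\max h = h(0)$, and $h$ vanishes on $\partial K$. Since $\xi\in H$, the form $\langle\cdot,\xi\rangle$ is constant along each fiber, so Fubini yields
\[
\int_K h(y)\,dy = \Vol(K_0) = 1, \qquad \int_K h(y)\langle y,\xi\rangle^2\,dy = \int_{K_0}\langle x,\xi\rangle^2\,dx = L_{K_0}^2.
\]
Thus the probability density $h(y)\,dy$ already gives $L_{K_0}^2$ as the $\xi$-second moment, and (a) becomes the statement that passing from $h(y)\,dy$ to the uniform measure on $K$ changes this moment by only an absolute factor. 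For (b), the concavity lower bound $h(y)\geq h(0)(1-\|y\|_K)$ together with $h\leq h(0)$ and $\int h=1$ gives $1/h(0)\leq \Vol(K)\leq n/h(0)$, and since $h(0)$ is a central chord length of $K_0$, it is bounded below by $cL_{K_0}\geq c$ (via the 1D isotropic identity along that chord) and above by $\mathrm{diam}(K_0)$, which is polynomial in $n$; taking $(n-1)$-th roots yields $\Vol(K)^{1/(n-1)}\sim 1$.

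The main obstacle is therefore (a), the moment comparison between $h(y)\,dy$ and the uniform measure on $K$. A pointwise comparison of the densities is unavailable: already for $K_0\propto B_2^n$ one has $h(0)\sim\sqrt n$ while the average of $h$ on $K$ is of order $1/\sqrt n$, so the density $h(y)\Vol(K)$ is very far from constant. The argument must instead be carried out at the level of second moments, exploiting that both measures are symmetric, log-concave, and supported on the common symmetric convex body $K$, together with the specific concavity and boundary vanishing of $h$ coming from its being the fiber length of the convex body $K_0$; Borell-type thin-shell concentration then controls both measures on a common annulus within $K$ on which the $\xi$-variances can be matched up to an absolute constant.
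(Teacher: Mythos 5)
Your reduction is fine as far as it goes: the Fubini identities $\int_K h=1$ and $\int_K h(y)\langle y,\xi\rangle^2dy=L_{K_0}^2$ are correct (the measure $h(y)\,dy$ is exactly the marginal $(P_H)_{\#}$ of the uniform measure on $K_0$), and your step (b) is essentially sound, since polynomial factors disappear after taking $(n-1)$-th roots; note however that your justification of $h(0)\geq cL_{K_0}$ is off — the one-dimensional marginal along $\R\theta$ lives on $[-h_{K_0}(\theta),h_{K_0}(\theta)]$, so that argument bounds the \emph{width}, not the central chord; for the chord you need the inradius estimate $r(K_0)\geq cL_{K_0}$ for isotropic bodies (the fact the paper quotes from [KLS]). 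The genuine gap is step (a), which you yourself identify as the main obstacle and then do not prove. The closing appeal to ``Borell-type thin-shell concentration on a common annulus'' cannot deliver the conclusion: thin-shell concentration controls the Euclidean norm $|Y|$, not the second moment in a \emph{fixed} direction $\xi$, and two symmetric log-concave measures supported on the same convex body can have wildly different directional second moments (e.g.\ densities proportional to $e^{-n|\langle y,u\rangle|}$ on a cube), so restricting both measures to a common shell gives nothing without a density comparison, which you concede is unavailable. Moreover, the pointwise information you do have from concavity, $h(0)(1-\Vert y\Vert_K)\leq h(y)\leq h(0)$, only yields $\frac{1}{\Vol(K)}\int_K\langle y,\xi\rangle^2dy\leq C\,n\,L_{K_0}^2$ after integrating in radial coordinates — a loss of a full factor of $n$ — so some genuine input from the theory of isotropic bodies is unavoidable at this point.

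This is precisely where the paper's proof differs. It observes that $K=P_H(K_0)$ is the central hyperplane section of the Steiner symmetral $S(K_0)$ (whose fiber density over $K$ is your $h$), brings $S(K_0)$ to isotropic position $S_1$, and then compares the uniform measure on the section $S_1\cap H$ with $L_{S(K_0)}$ by combining Hensley's theorem (relating $\int_{S_1\cap H}\langle x,\xi\rangle^2dx$ to $\Vol(S_1\cap H\cap\xi^\perp)$) with the Ball/Milman--Pajor estimates $\Vol(S_1\cap H)\sim L_{S(K_0)}^{-1}$, $\Vol(S_1\cap H\cap\xi^\perp)\sim L_{S(K_0)}^{-2}$, and finally $L_{S(K_0)}\sim L_{K_0}$. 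In other words, the moment comparison you are missing is exactly the Hensley/Ball section machinery; if you want to salvage your measure-theoretic framing you would need to invoke results of that type (e.g.\ Ball's bodies $K_{n+1}(h)$ and their comparison with the support of $h$), not generic concentration for log-concave measures.
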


\begin{proof}
The two first expressions are equivalent, since $\Vol(K)^\frac{1}{n-1}\sim 1$.
Indeed, using Hensley's result \cite{He} and the best general known upper bound
for the isotropy constant of an $n$-dimensional convex body \cite{K1}, we have
$$\Vol(K)^\frac{1}{n-1}\geq\Vol(K_0\cap H)^\frac{1}{n-1}\geq
\left(\frac{c}{L_{K_0}}\right)^\frac{1}{n-1}\geq
\left(\frac{c}{n^\frac{1}{4}}\right)^\frac{1}{n-1}\geq c.$$

On the other hand, since (see \cite{RS} for a proof)
$$\frac{1}{n}\Vol(K)\Vol(K_0\cap H^\perp)\leq \Vol(K_0)=1$$
we have
$$
\Vol(K)^\frac{1}{n-1}\leq\left(\frac{n}{2r(K_0)}\right)^\frac{1}{n-1}
\leq\left(\frac{n}{2L_{K_0}}\right)^\frac{1}{n-1}\leq(cn)^\frac{1}{n-1}\leq c,
$$
where we have used that $r(K_0)=\sup\{r\,:\, rB_2^n\subseteq K_0\}\geq
cL_{K_0}$, see \cite{KLS}.

Let us prove the last estimate. Let $S(K_0)$ be the Steiner symmetrization of
$K_0$
with respect to the hyperplane $H$ and let $S_1$ be its isotropic position. It
is known (see \cite{B} or \cite{MP}) that for any isotropic $n$-dimensional
convex body $L$ and any linear
subspace $E$ of codimension $k$
$$
\Vol(L\cap E)^\frac{1}{k}\sim \frac{L_C}{L_L},
$$
where $C$ is a convex body in $E^\perp$. In particular, we have that
$$
\Vol(S_1\cap H)\sim \frac{1}{L_{S(K_0)}}
$$
and
$$
\Vol(S_1\cap H\cap\xi^\perp)\sim\frac{1}{L_{S(K_0)}^2}.
$$

Since $K_0$ is symmetric, $S_1\cap H$ is symmetric and thus centered. Then, by
Hensley's result \cite{He}
\begin{align*}
 L_{S(K_0)}^2&\sim\frac{\Vol(S_1\cap H)^2}{\Vol(S_1\cap
H\cap\xi^\perp)^2}\sim\frac{1}{\Vol(S_1\cap H)}\int_{S_1\cap H}\langle
x,\xi\rangle^2dx\\
&\sim\frac{1}{\Vol(S_1\cap H)^{1+\frac{2}{n-1}}}\int_{S_1\cap
H}\langle x,\xi\rangle^2dx,
\end{align*}
because $\Vol(S_1\cap H)\sim\frac{1}{L_{S(K_0)}}$ and so $\Vol(S_1\cap
H)^\frac{1}{n-1}\sim c$.

But now, $\widetilde{S_1\cap H}=\Vol(S_1\cap H)^{-\frac{1}{n-1}}(S_1\cap
H)=\widetilde{S(K_0)\cap H}=\widetilde{K}$, because, even though $S(K_0)$ is
not isotropic, $S_1$ is obtained from $S(K_0)$ multiplying it by some $\lambda$
in
$H$ and by $\frac{1}{\lambda^\frac{1}{n-1}}$ in $H^\perp$. Thus,
$$
L_{S(K_0)}^2\sim\int_{\widetilde{S_1\cap H}}\langle
x,\xi\rangle^2dx=\int_{\widetilde{K}}\langle
x,\xi\rangle^2dx=\frac{1}{\Vol(K)^{1+\frac{2}{n-1}}}\int_{K}\langle
x,\xi\rangle^2dx
$$
and since $L_{S(K_0)}\sim L_{K_0}$ we obtain the result.
\end{proof}

The first examples we consider are the random vectors uniformly distributed on
hyperplane projections of the cube. We will see that these random vectors
satisfy the negative square correlation property with respect to any orthonormal
basis. Consequently, by Proposition \ref{squnegcorr}, any linear image of
these random vectors will verify the
variance conjecture with an absolute constant.

\begin{thm}
Let $\theta\in S^{n-1}$ and let $K=P_HB_\infty^n$ be the projection of
$B_\infty^n$
on the hyperplane $H=\theta^\perp$. If $X$ is a random vector uniformly
distributed on $K$ then, for
any two orthonormal vectors $\eta_1,\eta_2\in H$, we have
$$
\E\langle X,\eta_1\rangle^2\langle
X,\eta_2\rangle^2\leq\E\langle X, \eta_1\rangle^2
\E\langle X, \eta_2\rangle^2.
$$
Consequently, $X$ satisfies the negative square correlation property with
respect to any orthonormal basis in $H$.
\end{thm}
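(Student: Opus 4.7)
The strategy is to apply Cauchy's formula to decompose $X$ as a mixture of projected facets of $B_\infty^n$ and then perform a direct moment calculation. The facets of $B_\infty^n$ are $F_i^\pm = \{\pm e_i + \sum_{j\neq i} t_j e_j : |t_j|\leq 1\}$, with $\Vol_{n-1}(P_H F_i^\pm) = 2^{n-1}|\theta_i|$ and $\Vol(K) = 2^{n-1}\|\theta\|_1$. After pairing antipodal facets (which project onto the same subset of $H$), Cauchy's formula gives
$$\E g(X) = \sum_{i=1}^{n} \frac{|\theta_i|}{\|\theta\|_1}\,\E g(P_H Y^i),$$
where $Y^i = \sigma_i e_i + \sum_{j\neq i} U_j^{(i)} e_j$ with $\sigma_i\in\{-1,+1\}$ and the $U_j^{(i)}$ i.i.d.\ uniform on $[-1,1]$. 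Since $\eta_1,\eta_2\in H=\theta^\perp$, one has $\langle P_H Y^i,\eta_k\rangle = \langle Y^i,\eta_k\rangle$, so the projection drops out of the computation entirely.

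Writing $a_j = \langle\eta_1,e_j\rangle$ and $b_j = \langle\eta_2,e_j\rangle$ and using $\E U_j^{(i)} = 0$, $\E (U_j^{(i)})^2 = \tfrac13$, $\E (U_j^{(i)})^4 = \tfrac15$, I expect the second moments on each facet to read
$$\E\langle Y^i,\eta_k\rangle^2 = \tfrac13 + \tfrac23\,\eta_{k,i}^2,$$
and, after expanding $\langle Y^i,\eta_1\rangle^2\langle Y^i,\eta_2\rangle^2$, retaining only the surviving even-index contributions, and invoking the orthogonality $\sum_{j\neq i} a_j b_j = -a_i b_i$,
$$\E\bigl[\langle Y^i,\eta_1\rangle^2\langle Y^i,\eta_2\rangle^2\bigr] - \E\langle Y^i,\eta_1\rangle^2\,\E\langle Y^i,\eta_2\rangle^2 = -\tfrac{10}{9}\,a_i^2b_i^2 - \tfrac{2}{15}\sum_{j\neq i}a_j^2b_j^2 \leq 0.$$
The identity $\langle\eta_1,\eta_2\rangle=0$ is precisely what turns the only potentially positive cross-term in the expansion into a negative contribution proportional to $a_i^2b_i^2$.

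To conclude, I would invoke the conditional covariance identity for the random index $I$ with weights $p_i = |\theta_i|/\|\theta\|_1$:
$$\text{Cov}\bigl(\langle X,\eta_1\rangle^2,\langle X,\eta_2\rangle^2\bigr) = \E_I\bigl[\text{Cov}(\,\cdot\,,\,\cdot\,\mid I)\bigr] + \text{Cov}_I\bigl(\E[\langle X,\eta_1\rangle^2\mid I],\,\E[\langle X,\eta_2\rangle^2\mid I]\bigr).$$
The first summand is already $\leq 0$ by the facetwise display; substituting $\tfrac13+\tfrac23\eta_{k,i}^2$ into the second summand and combining, I expect the total covariance to collapse into a sum of three manifestly nonpositive terms, proportional to $\sum_i p_i a_i^2b_i^2$, $\sum_i a_i^2b_i^2$, and $\bigl(\sum_i p_i a_i^2\bigr)\bigl(\sum_i p_i b_i^2\bigr)$ respectively, each with a negative coefficient. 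Since $\eta_1,\eta_2$ were arbitrary orthonormal vectors in $H$, the inequality extends verbatim to every pair drawn from any orthonormal basis of $H$, which is the asserted negative square correlation property. The only real obstacle is the bookkeeping in the fourth-moment expansion; the orthogonality $\langle\eta_1,\eta_2\rangle=0$ is the structural fact that forces every remaining coefficient to be negative.
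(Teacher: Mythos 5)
Your proposal is correct and follows essentially the same route as the paper: Cauchy's formula reduces everything to per-facet moment computations on the $(n-1)$-dimensional cube, the orthogonality $\langle\eta_1,\eta_2\rangle=0$ turns the cross term negative, and your facetwise covariance $-\tfrac{10}{9}\eta_1(i)^2\eta_2(i)^2-\tfrac{2}{15}\sum_{j\neq i}\eta_1(j)^2\eta_2(j)^2$ checks out, so that your law-of-total-covariance bookkeeping (the paper instead compares the two sides directly, discarding the nonnegative term $\tfrac{4}{9}\bigl(\sum_i p_i\eta_1(i)^2\bigr)\bigl(\sum_i p_i\eta_2(i)^2\bigr)$ from the product of second moments) yields exactly the three nonpositive terms you describe. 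One minor point: antipodal facets do not in general project onto the same subset of $H$ (their projections are reflections through the origin), but this is immaterial since, by central symmetry, the joint law of $\bigl(\langle\cdot,\eta_1\rangle^2,\langle\cdot,\eta_2\rangle^2\bigr)$ is the same on both, so your paired mixture representation is valid.
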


\begin{proof}
 Let $F_i$ denote the facet $F_i=\{y \in B_\infty^n; y_{|i|}=\signum{i}\}$ ,
$i\in
\{\pm1,\dots, \pm n\}$. From Cauchy's formula, it is clear that for any function
$f$
\begin{align*}
 \E f(X)&=\sum_{i=\pm1}^{\pm n}\frac{|\theta_{|i|}|}{2\Vert \theta\Vert_1}
\E(f(P_H Y^i))
\end{align*}
 where $Y^i$ is a random vector uniformly distributed on the facet $F_i$.

Remark that $$\Vol(P_H(F_i))=|\langle \theta,
e_{|i|}\rangle|\,\Vol(F_i)=2^{n-1}|\theta_{|i|}|$$
for $i=\pm1,\dots,\pm n$ and $\Vol(K)=2^{n-1}\Vert \theta\Vert_1$.

For any unit vector $\eta\in H$, we have by isotropicity of the facets of
$B_\infty^n$,
\begin{align*}
 \E\langle X,\eta\rangle^2&=\sum_{i=\pm1}^{\pm
n}\frac{|\theta_{|i|}|}{2\Vert\theta\Vert_1}
\E\langle Y^i,\eta\rangle^2
=\sum_{i=\pm1}^{\pm n}\frac{|\theta_{|i|}|}{2\Vert\theta\Vert_1}
\E\sum_{j=1}^n\eta_j^2{Y_j^i}^2\\
&=\frac{1}{2}\sum_{j=1}^n\eta_j^2\sum_{i=\pm1}^{\pm
n}\frac{|\theta_{|i|}|}{\Vert\theta\Vert_1} \E {Y^i_j}^2
=\sum_{j=1}^n\eta_j^2\left(\frac{|\theta_j|}{\Vert\theta\Vert_1}+\frac{1}{3}
\sum_{i\not=
j}\frac{|\theta_i|}{\Vert\theta\Vert_1} \right)\\
&=\sum_{j=1}^n\eta_j^2\left(\frac{2|\theta_j|}{3\Vert\theta\Vert_1}+\frac{1}{3}
\right)
=\frac {1}{3}+\frac{2}{3}
\sum_{j=1}^n\eta_j^2\frac{|\theta_j|}{\Vert\theta\Vert_1}.
\end{align*}

Consequently,
\begin{eqnarray*}
&&\E\langle X, \eta_1\rangle^2\E\langle X, \eta_2\rangle^2\cr
&=&\frac{1}{9}+\frac{2}{9}\sum_{i=1}^n\frac{|\theta_i|}{\Vert\theta\Vert_1}
(\eta_1(i)^2+\eta_2(i)^2)+\frac{4}{9}\sum_{i_1,i_2=1}^n\frac{|\theta_{i_1}
||\theta_{i_2}|}{\Vert\theta\Vert_1^2}\eta_1(i_1)^2\eta_2(i_2)^2\cr
&\geq&\frac{1}{9}+\frac{2}{9}\sum_{i=1}^n\frac{|\theta_i|}{\Vert\theta\Vert_1}
(\eta_1(i)^2+\eta_2(i)^2).
\end{eqnarray*}

On the other hand, by symmetry,
\begin{eqnarray*}
&&\E\langle X,\eta_1\rangle^2\langle
X,\eta_2\rangle^2=\sum_{i=\pm1}^{\pm
n}\frac{|\theta_{|i|}|}{2\Vert\theta\Vert_1}\E\langle Y^i,\eta_1\rangle^2\langle
Y^i,\eta_2\rangle^2\cr
&=&\sum_{i=\pm1}^{\pm n}\frac{|\theta_{|i|}|}{2\Vert\theta\Vert_1}
\frac{1}{\Vol(B_\infty^{n-1})}\int_{B_\infty^{n-1}}(\langle
y,P_{e_{|i|}}\eta_1\rangle +\signum(i)\eta_1(i))^2
(\langle y,P_{e_{|i|}}\eta_2\rangle +\signum(i)\eta_2(i))^2dy\cr
&=&\sum_{i=1}^n\frac{|\theta_i|}{\Vert\theta\Vert_1}\frac{1}{\Vol(B_\infty^{n-1}
)}\int_{B_\infty^{n-1}}\left(\langle y,P_{e_i^\perp}\eta_1\rangle^2\langle
y,P_{e_i^\perp}\eta_2\rangle^2+\eta_2(i)^2\langle
y,P_{e_i^\perp}\eta_1\rangle^2+\right.\cr
&+&\left.\eta_1(i)^2\langle
y,P_{e_i^\perp}\eta_2\rangle^2+\eta_1(i)^2\eta_2(i)^2+4\eta_1(i)\eta_2(i)\langle
y,P_{e_i^\perp}\eta_1\rangle\langle y,P_{e_i^\perp}\eta_2\rangle\right) dy\cr
&=&\sum_{i=1}^n\frac{|\theta_i|}{\Vert\theta\Vert_1}\left(\frac{1}{3}
\eta_1(i)^2|P_{e_i^\perp}\eta_2|^2+\frac{1}{3}\eta_2(i)^2|P_{e_i^\perp}
\eta_1|^2+\eta_1(i)^2\eta_2(i)^2+\right.\cr
&+&4\eta_1(i)\eta_2(i)\frac{1}{\Vol(B_\infty^{n-1})}\int_{B_\infty^{n-1}}\langle
y,P_{e_i^\perp}\eta_1\rangle\langle y,P_{e_i^\perp}\eta_2\rangle
dy\cr
&+&\left.\frac{1}{\Vol(B_\infty^{n-1})}\int_{B_\infty^{n-1}}\langle
y,P_{e_i^\perp}\eta_1\rangle^2\langle y,P_{e_i^\perp}\eta_2\rangle^2
dy\right)\cr
&=&\sum_{i=1}^n\frac{|\theta_i|}{\Vert\theta\Vert_1}\left(\frac{1}{3}
\eta_1(i)^2+\frac{1}{3}\eta_2(i)^2+\frac{1}{3}\eta_1(i)^2\eta_2(i)^2+\right.\cr
&+&4\eta_1(i)\eta_2(i)\frac{1}{\Vol(B_\infty^{n-1})}\int_{B_\infty^{n-1}}\langle
y,P_{e_i^\perp}\eta_1\rangle\langle y,P_{e_i^\perp}\eta_2\rangle
dy\cr
&+&\left.
\frac{1}{\Vol(B_\infty^{n-1})}\int_{B_\infty^{n-1}}\langle
y,P_{e_i^\perp}\eta_1\rangle^2\langle y,P_{e_i^\perp}\eta_2\rangle^2
dy\right)\cr
\end{eqnarray*}

Since
\begin{eqnarray*}
\frac{1}{\Vol(B_\infty^{n-1})}\int_{B_\infty^{n-1}}
\langle
y,P_{e_i^\perp}\eta_1\rangle\langle y,P_{e_i^\perp}\eta_2\rangle
dy&=&\frac{1}{\Vol(B_\infty^{n-1})}\int_{B_\infty^{n-1}}\left(\sum_{l_1,l_2\neq
i}y_{l_1}y_{l_2}\eta_1(l_1)\eta_2(l_2)\right)dy\cr
&=&\frac{1}{\Vol(B_\infty^{n-1})}\int_{B_\infty^{n-1}}\left(\sum_{l\neq
i}y_l^2\eta_1(l)\eta_2(l)\right)dy\cr
&=&\frac{1}{3}\sum_{l\neq i}\eta_1(l)\eta_2(l)\cr
&=&\frac{1}{3}(\langle \eta_1,\eta_2\rangle-\eta_1(i)\eta_2(i))\cr
&=&-\frac{1}{3}\eta_1(i)\eta_2(i)
\end{eqnarray*}
the previous sum equals
$$
\sum_{i=1}^n\frac{|\theta_i|}{\Vert\theta\Vert_1}\left(\frac{1}{3}
\eta_1(i)^2+\frac{1}{3}\eta_2(i)^2-\eta_1(i)^2\eta_2(i)^2+
\frac{1}{\Vol(B_\infty^{n-1})}\int_{B_\infty^{n-1}}\langle
y,P_{e_i^\perp}\eta_1\rangle^2\langle
y,P_{e_i^\perp}\eta_2\rangle^2 dy\right).
$$

Now,
\begin{eqnarray*}
& &\frac{1}{\Vol(B_\infty^{n-1})}\int_{B_\infty^{n-1}}\langle
y,P_{e_i^\perp}\eta_1\rangle^2\langle y,P_{e_i^\perp}\eta_2\rangle^2 dy\cr
&=&\frac{1}{\Vol(B_\infty^{n-1})}\int_{B_\infty^{n-1}}\left(\sum_{l_1,l_2,l_3,
l_4}y_
{l_1}y_{l_2}y_{l_3}y_{l_4}
\eta_1(l_1)\eta_1(l_2)\eta_2(l_3)\eta_2(l_4)\right)dy\cr
&=&\sum_{l\neq
i}\eta_1(l)^2\eta_2(l)^2\frac{1}{\Vol(B_\infty^{n-1})}\int_{B_\infty^{n-1}}
y_l^4dy\cr
&+&\sum_{l_1\neq l_2 (\neq
i)}\frac{1}{\Vol(B_\infty^{n-1})}\int_{B_\infty^{n-1}}y_{l_1}^2y_{l_2}
^2dy(\eta_1(l_1)^2\eta_2(l_2)^2+\eta_1(l_1)\eta_1(l_2)\eta_2(l_1)\eta_2(l_2))\cr
&=&\frac{1}{5}\sum_{l\neq i}\eta_1(l)^2\eta_2(l)^2+\frac{1}{9}\sum_{l_1\neq l_2
(\neq
i)}(\eta_1(l_1)^2\eta_2(l_2)^2+\eta_1(l_1)\eta_1(l_2)\eta_2(l_1)\eta_2(l_2))\cr
&=&\frac{1}{5}\sum_{l\neq i}\eta_1(l)^2\eta_2(l)^2\cr
&+&\frac{1}{9}\left[\sum_{l\neq
i}\left(\eta_1(l)^2(1-\eta_2(l)^2-\eta_2(i)^2)+\eta_1(l)\eta_2(l)(\langle
\eta_1,\eta_2\rangle-\eta_1(l)\eta_2(l)-\eta_1(i)\eta_2(i))\right)\right]\cr
&=&\frac{1}{5}\sum_{l\neq i}\eta_1(l)^2\eta_2(l)^2\cr
&+&\frac{1}{9}\left(1-\eta_1(i)^2-\sum_{l\neq
i}\eta_1(l)^2\eta_2(l)^2-\eta_2(i)^2+\eta_1(i)^2\eta_2(i)^2\right.\cr
&-&\left.\sum_{l\neq
i}\eta_1(l)^2\eta_2(l)^2+\eta_1(i)^2\eta_2(i)^2\right)\cr
&=&\frac{1}{9}-\frac{1}{9}\eta_1(i)^2-\frac{1}{9}\eta_2(i)^2+\frac{2}{9}
\eta_1(i)^2\eta_2(i)^2-\frac{1}{45}\sum_{l\neq i}\eta_1(l)^2\eta_2(l)^2.
\end{eqnarray*}

Consequently
\begin{eqnarray*}
\E\langle X,\eta_1\rangle^2\langle
X,\eta_2\rangle^2&=&\frac{1}{9}+\sum_{i=1}^n\frac{|\theta_i|}{\Vert\theta\Vert_1
}\left(\frac{2}
{9}\eta_1(i)^2+\frac{2}{9}\eta_2(i)^2-\frac{7}{9}
\eta_1(i)^2\eta_2(i)^2\right.\cr
&-&\left.\frac{1}{45}
\sum_{l\neq i}\eta_1(i)^2\eta_2(l)^2\right)\cr
&\leq&\frac{1}{9}+\frac{2}{9}\sum_{i=1}^n\frac{|\theta_i|}{\Vert\theta\Vert_1}
(\eta_1(i)^2+\eta_2(i)^2)
\end{eqnarray*}
which concludes the proof.
\end{proof}
By Proposition \ref{squnegcorr} we obtain the following
\begin{cor}
There exists an absolute constant $C$ such that for every hyperplane $H$
and any linear map $T$, if $X$ is a random vector uniformly distributed on
$P_H B_\infty^n$, then $TX$ verifies the variance conjecture with constant
$C$, {\it i.e.}
$$
\text{Var }|TX|^2\leq C\lambda_{TX}^2\E|TX|^2
$$
\end{cor}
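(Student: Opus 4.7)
The plan is to reduce this statement to the setting of Proposition~\ref{squnegcorr} by exploiting the fact that $X$ is almost surely supported on the $(n-1)$-dimensional subspace $H$. The preceding theorem gives negative square correlation of $X$ with respect to every orthonormal basis of $H$, which is exactly the hypothesis needed to run the argument of Proposition~\ref{squnegcorr}, once we choose a basis of $H$ that also diagonalises the action of $T$.

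Concretely, I would first apply the singular value decomposition to the restriction $T\big|_H:H\to\R^n$, producing an orthonormal basis $\{\eta_i\}_{i=1}^{n-1}$ of $H$, orthonormal vectors $\{f_i\}_{i=1}^{n-1}$ in $\R^n$, and nonnegative scalars $\lambda_1,\dots,\lambda_{n-1}$ with $T\eta_i=\lambda_i f_i$. Because $X=\sum_i\langle X,\eta_i\rangle\eta_i$ almost surely, this yields
$$|TX|^2=\sum_{i=1}^{n-1}\lambda_i^2\langle X,\eta_i\rangle^2.$$

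Next I would expand $\text{Var }|TX|^2$ into a diagonal sum plus cross terms exactly as in the proof of Proposition~\ref{squnegcorr}. The negative square correlation established in the theorem, applied to the basis $\{\eta_i\}\subset H$, makes every cross term nonpositive, and Borell's lemma bounds each diagonal term by $C\lambda_i^4(\E\langle X,\eta_i\rangle^2)^2$. Summing and factoring out the maximum gives
$$\text{Var }|TX|^2\le C\Bigl(\max_i\lambda_i^2\,\E\langle X,\eta_i\rangle^2\Bigr)\sum_{i=1}^{n-1}\lambda_i^2\,\E\langle X,\eta_i\rangle^2.$$

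Finally I would identify the two right-hand factors with $\lambda_{TX}^2$ and $\E|TX|^2$. The orthonormality of the $f_i$ yields $\E|TX|^2=\sum_i\lambda_i^2\E\langle X,\eta_i\rangle^2$ directly, while testing the definition of $\lambda_{TX}^2=\sup_{\theta\in S^{n-1}}\E\langle TX,\theta\rangle^2$ against each $\theta=f_k$ gives $\lambda_{TX}^2\ge\lambda_k^2\E\langle X,\eta_k\rangle^2$, and hence the max is at most $\lambda_{TX}^2$. The only real obstacle is a mild mismatch of dimensions: the theorem yields negative square correlation only for orthonormal bases of $H$, while Proposition~\ref{squnegcorr} is phrased for bases of $\R^n$; picking the SVD basis of $T\big|_H$ rather than an arbitrary basis of $\R^n$ is what neutralises this mismatch, since the components of $X$ orthogonal to $H$ vanish and play no role in $|TX|^2$.
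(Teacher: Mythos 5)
Your proof is correct and follows essentially the same route as the paper: the paper deduces the corollary by invoking Proposition~\ref{squnegcorr}, whose proof is exactly the diagonalizing expansion of $|TX|^2$, the cancellation of cross terms via the negative square correlation property, and the Borell's lemma bound on the diagonal terms that you carry out. Your explicit handling of the mismatch between orthonormal bases of $H$ and of $\R^n$, via the singular value decomposition of $T\big|_H$, is a slightly more careful inlining of that same argument and is valid.
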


The next examples we consider are random vectors uniformly distributed on
projections of $B_1^n$. Even though in this case we are not able to prove that
these vectors satify a square negative correlation property, we are still able
to show that they verify the variance conjecture with some absolute constant.

\begin{thm}
 There exists an absolute constant $C$ such that for every hyperplane $H$, if
$X$ is a random vector uniformly distributed on $P_H B_1^n$, $X$ verifies the
variance conjecture with constant $C$, {\it i.e.}
$$
\text{Var } |X|^2\leq C\lambda_X^2\E|X|^2.
$$
\end{thm}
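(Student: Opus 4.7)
My plan is to use Cauchy's formula exactly as in the cube case, but then exploit a cleaner additive decomposition of $|X|^2$ that isolates its dominant ``radial'' part. The facets of $B_1^n$ are the simplices $F_\varepsilon = \mathrm{conv}\{\varepsilon_i e_i : 1 \le i \le n\}$ for $\varepsilon \in \{-1,+1\}^n$, with $\Vol(P_H F_\varepsilon) \propto |\langle\varepsilon,\theta\rangle|$. Cauchy's formula realizes $X$ in law as $P_H Y^\varepsilon$, where $\varepsilon$ is drawn with weights $p_\varepsilon\propto |\langle\varepsilon,\theta\rangle|$ and, independently, $Y^\varepsilon=\sum_i\varepsilon_i U_i e_i$ with $U\sim\mathrm{Dirichlet}(1,\dots,1)$ (i.e.\ uniform on the standard simplex). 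Since $\theta\in S^{n-1}$,
\[
|X|^2=|Y^\varepsilon|^2-\langle Y^\varepsilon,\theta\rangle^2=A-B_\varepsilon^2,\qquad A:=\sum_{i=1}^n U_i^2,\quad B_\varepsilon:=\sum_{i=1}^n\varepsilon_i\theta_iU_i,
\]
and the key point is that $A$ depends only on $U$, hence is independent of the facet choice $\varepsilon$. Using $\mathrm{Var}(A-B_\varepsilon^2)\le 2\,\mathrm{Var}(A)+2\,\mathrm{Var}(B_\varepsilon^2)$, I would bound the two terms separately.

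For $\mathrm{Var}(A)$ I would compute directly from the standard Dirichlet moment identities $\E U_i^4=24/[n(n+1)(n+2)(n+3)]$ and $\E U_i^2U_j^2=4/[n(n+1)(n+2)(n+3)]$ ($i\ne j$). After expanding $\E A^2$ and $(\E A)^2=4/(n+1)^2$, the leading $\Theta(1/n^2)$ terms cancel and one obtains
\[
\mathrm{Var}(A)=\frac{4(n-1)}{(n+1)^2(n+2)(n+3)}\sim\frac{1}{n^3}.
\]
For $\mathrm{Var}(B_\varepsilon^2)$ I would use the law of total variance. Conditional on $\varepsilon$, $B_\varepsilon$ is a linear functional of the log-concave vector $U$, hence itself log-concave, so Borell's lemma yields $\mathrm{Var}(B_\varepsilon^2\mid\varepsilon)\le C(\E[B_\varepsilon^2\mid\varepsilon])^2$, and a direct calculation gives $\E[B_\varepsilon^2\mid\varepsilon]=(1+\langle\varepsilon,\theta\rangle^2)/[n(n+1)]$. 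Meanwhile Khintchine-type estimates on the weighted sums $\sum_\varepsilon|\langle\varepsilon,\theta\rangle|^p\sim 2^n$ control the between-$\varepsilon$ variance $\mathrm{Var}_\varepsilon(\E[B_\varepsilon^2\mid\varepsilon])$. Both contributions are $O(1/n^4)$, so $\mathrm{Var}(B_\varepsilon^2)\le C/n^4$, a genuinely lower-order term.

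To close the argument I would invoke Proposition~\ref{sigma} applied to the isotropic dilate of $B_1^n$, namely $c_nB_1^n$ with $c_n\sim n$ and $L_{B_1^n}\sim 1$: this yields $\lambda_X^2\sim 1/n^2$ and $\E|X|^2\sim 1/n$, so $\lambda_X^2\E|X|^2\sim 1/n^3$, matching the $O(1/n^3)$ bound on $\mathrm{Var}|X|^2$ obtained above. The main obstacle is the explicit Dirichlet-moment computation behind $\mathrm{Var}(A)\sim 1/n^3$: the naive bound $\mathrm{Var}(A)\le\E A^2\sim 1/n^2$ is a factor of $n$ too large, so the argument depends on the exact cancellation of the leading $1/n^2$ terms of $\E A^2$ and $(\E A)^2$. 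Once that cancellation is verified, everything else reduces to routine weighted-moment estimates on $\varepsilon$ together with a single application of Borell's lemma.
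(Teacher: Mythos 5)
Your proof is correct and follows essentially the same route as the paper: Cauchy's formula realizing $X$ as $P_H(\varepsilon Y)$ with $\varepsilon$ weighted by $|\langle\varepsilon,\theta\rangle|$, the decomposition $|X|^2=|Y|^2-\langle Y,\varepsilon\theta\rangle^2$, the exact simplex (Dirichlet) moments producing the crucial cancellation of the leading $1/n^2$ terms, Khintchine-type control of the weighted sign averages, and Proposition~\ref{sigma} to identify $\lambda_X^2\E|X|^2\sim n^{-3}$ (your value $\mathrm{Var}\,|U|^2=\tfrac{4(n-1)}{(n+1)^2(n+2)(n+3)}$ and your conditional mean $\tfrac{1+\langle\varepsilon,\theta\rangle^2}{n(n+1)}$ both check out). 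The only difference is bookkeeping: you split $\mathrm{Var}(A-B_\varepsilon^2)\le 2\,\mathrm{Var}(A)+2\,\mathrm{Var}(B_\varepsilon^2)$ and handle the lower-order term by conditioning on $\varepsilon$ plus Borell's lemma, whereas the paper bounds $\mathrm{Var}\,|X|^2\le \E|Y|^4+\E\langle\varepsilon Y,\theta\rangle^4-\left(\E|Y|^2-\E\langle\varepsilon Y,\theta\rangle^2\right)^2$ and controls $\E\langle\varepsilon Y,\theta\rangle^4$ directly via Khintchine conditioned on $Y$; both give the same $O(n^{-3})$ bound.
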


\begin{proof}
First of all, notice that by Proposition \ref{sigma} we have that for every
$\xi\in S^{n-1}\cap H$
$$
\E\langle \textrm{Vol}(B_1^n)^{-\frac{1}{n}}X,\xi\rangle^2\sim L_{B_1^n}^2\sim 1
$$
and so
$$
\lambda_X^2\sim\frac{1}{n^2}\hspace{1cm}\textrm{ and
}\hspace{1cm}\E|X|^2\sim\frac{1}{n}.
$$
Thus, we have to prove that $\displaystyle{\text{Var }|X|^2\leq\frac{C}{n^3}}$.

By Cauchy formula, denoting by $\theta$ the unit vector orthogonal to $H$, $Y$ a
random vector uniformly distributed on $\Delta_{n-1}=\{y\in\R^n\,:\,y_i\geq 0,
\sum_{i=1}^n y_i=1\}$, $\varepsilon$ a random vector, independent of $Y$,  in
$\{-1,1\}^n$ distributed according to
$$\Pro(\varepsilon=\varepsilon_0)=\frac{|\langle\varepsilon_0,\theta\rangle|}{
\sum_{\varepsilon\in\{-1,1\}^n}|\langle\varepsilon,\theta\rangle|}=\frac{\textrm
{Vol}_{n-1}(\Delta_{n-1})|\langle\varepsilon_0,\theta\rangle|}{2\sqrt n
\textrm{Vol}_{n-1}(P_H(B_1^n))}$$
and
$$
\varepsilon x=(\varepsilon_1x_1,\dots,\varepsilon_n x_n)
$$
we have that
\begin{eqnarray*}
\text{Var }|X|^2&=&\E|X|^4-(\E|X|^2)^2=\E|P_H(\varepsilon
Y)|^4-(\E|P_H(\varepsilon
Y)|^2)^2\cr
&=&\E(|\varepsilon Y|^2-\langle\varepsilon Y,\theta\rangle^2)^2-(\E(|\varepsilon
Y|^2-\langle\varepsilon Y,\theta\rangle^2))^2\cr
&=&\E(|Y|^2-\langle Y,\varepsilon\theta\rangle^2)^2-(\E(|Y|^2-\langle
Y,\varepsilon\theta\rangle^2))^2\cr
&\leq&\E|Y|^4+\E\langle \varepsilon Y,\theta\rangle^4-(\E|Y|^2-\E\langle
\varepsilon Y,\theta\rangle^2)^2.
\end{eqnarray*}
Since for every $a,b\in \N$ with $a+b=4$ we have
$$
\E Y_1^aY_2^b=\frac{a!b!}{(n+3)(n+2)(n+1)n}
$$
we have
\begin{eqnarray*}
\E|Y|^4&=&n\E Y_1^4+n(n-1)\E Y_1^2Y_2^2\cr
&=&\frac{4!}{(n+3)(n+2)(n+1)} +\frac{4(n-1)}{(n+3)(n+2)(n+1)}\cr
&=&\frac{4}{n^2}+O\left(\frac{1}{n^3}\right).
\end{eqnarray*}
Denoting by $\epsilon$ a radom vector uniforly distributed on $\{-1,1\}^n$ we
have, by Khintchine inequality,
\begin{eqnarray*}
\E\langle \varepsilon
Y,\theta\rangle^4&=&\frac{\textrm{Vol}_{n-1}(\Delta_{n-1})}{2\sqrt n
\textrm{Vol}_{n-1}(P_H(B_1^n))}\E_Y\sum_{\varepsilon\in\{-1,1\}^n}|\langle
\varepsilon,\theta\rangle|\langle \varepsilon
Y,\theta\rangle^4\cr
&=&\frac{2^n\textrm{Vol}_{n-1}(\Delta_{n-1})}{2\sqrt n
\textrm{Vol}_{n-1}(P_H(B_1^n))}\E_Y\E_{\epsilon}|\langle
\epsilon,\theta\rangle|\langle \epsilon
Y,\theta\rangle^4\cr
&\leq& C\E_Y\left(\E_\epsilon\langle
\epsilon,\theta\rangle^2\right)^\frac{1}{2}\left(\E_\epsilon\langle \epsilon
Y,\theta\rangle^8\right)^\frac{1}{2}\cr
&\leq&C\E_Y\left(\sum_{i=1}^nY_i^2\theta_i^2\right)^2\cr
&=&C\left(\E Y_1^4\sum_{i=1}^n\theta_i^4+\E Y_1^2 Y_2^2\sum_{i\neq
j}\theta_i^2\theta_j^2\right)\cr
&=&\frac{C}{(n+3)(n+2)(n+1)n}\left(24\sum_{i=1}^n\theta_i^4+4\sum_{i,j=1}
^n\theta_i^2\theta_j^2\right)\cr
&\leq&\frac{C}{n^4}
\end{eqnarray*}
since $\sum_{i=1}^n\theta_i^4\leq\sum_{i=1}^n\theta_i^2=1$.

On the other hand, since
$$
\E Y_1^2=\frac{2}{(n+1)n}\hspace{1cm}\textrm{ and }\hspace{1cm}\E
Y_1Y_2=\frac{1}{(n+1)n}
$$
we have
$$
\E|Y|^2=n\E Y_1^2=\frac{2}{n+1}
$$
and
\begin{eqnarray*}
 \E\langle\varepsilon Y,\theta\rangle^2&=&\E\left(\sum_{i=1}^n
Y_i^2\theta_i^2+\sum_{i\neq
j}\varepsilon_i\varepsilon_jY_iY_j\theta_i\theta_j\right)\cr
&=&\E Y_1^2+\sum_{i\neq
j}\theta_i\theta_j\E_\varepsilon\varepsilon_i\varepsilon_j\E_Y Y_1Y_2\cr
&=&\frac{2}{(n+1)n}+\frac{1}{(n+1)n}(\E_{\varepsilon}\langle\varepsilon,
\theta\rangle^2-1)\cr
&=&\frac{1}{(n+1)n}(1+\E_{\varepsilon}\langle\varepsilon,
\theta\rangle^2)\sim\frac{1}{n^2},\cr
\end{eqnarray*}
since, by Khintchine inequality,
\begin{eqnarray*}
\E_{\varepsilon}\langle\varepsilon,\theta\rangle^2&=&\frac{\textrm{Vol}_{n-1}
(\Delta_{n-1})}{2\sqrt n
\textrm{Vol}_{n-1}(P_H(B_1^n))}\sum_{\varepsilon\in\{-1,1\}^n}|\langle
\varepsilon,\theta\rangle|^3\cr
&\leq&C\E_{\epsilon}|\langle
\varepsilon,\theta\rangle|^3\sim C
\end{eqnarray*}

Thus
$$
(\E|Y|^2-\E\langle\varepsilon
Y,\theta\rangle^2)^2=\frac{4}{n^2}+O\left(\frac{1}{n^3}\right)
$$ and so
$$
\text{Var }|X|^2\leq \frac{C}{n^3}.
$$
\end{proof}

\proof[Acknowledgements]
Part of this work was done while the first named author was a postdoctoral fellow at the
Department of Mathematical and Statistical Sciences at University of
Alberta. He would like to thank the department for providing such good
environment and working conditions.

The authors also want to thank the referee for providing several comments
which improved the presentation of the paper a lot.

\end{document}